\documentclass[a4paper,11pt]{article}

\usepackage[affil-it]{authblk}

\usepackage{indentfirst}
\usepackage[margin = 1 in]{geometry}

\usepackage[numbers,sort&compress]{natbib}
\usepackage{amssymb}
\usepackage{amsmath}
\usepackage{stmaryrd}
\usepackage{empheq}
\usepackage{amsthm}
\usepackage{mathrsfs}
\usepackage{euscript}
\usepackage{soul}
\usepackage{xcolor}
\usepackage[hypertexnames=false,colorlinks,linkcolor={blue},citecolor={blue}]{hyperref}
\usepackage{graphicx}
\usepackage{float}
\usepackage{caption}
\usepackage{epsfig}
\usepackage{subcaption}
\usepackage{scalerel} 

\usepackage{enumitem}
\setlist{
leftmargin=*, 
topsep = 4pt, 
itemsep = 1pt, 
}

\usepackage[T1]{fontenc}
\usepackage{lmodern}
\usepackage{microtype}
\usepackage[english]{babel}

\usepackage{fancyhdr}
\fancypagestyle{draftfirstpage}{

\fancyhf{}
}
\numberwithin{equation}{section}

\newtheorem{theorem}{Theorem}[section]

\newtheorem{lemma}[theorem]{Lemma}
\newtheorem{proposition}[theorem]{Proposition}
\theoremstyle{definition}
\newtheorem{definition}[theorem]{Definition}

\newtheorem{question}{Question}

\newtheorem{example}[theorem]{Example}

\allowdisplaybreaks

\newcommand{\proofstep}[1]{%
  \par\medskip\noindent\textbf{#1}\enspace
}

\newcommand{\ie}{{\itshape i.e., }}
\newcommand{\eg}{{\itshape e.g., }}
\newcommand{\eqfinp}{\,.}
\newcommand{\eqfinv}{\,,}
\newcommand{\eqsepv}{\,,}
\newcommand{\eqsepquad}{\,,\quad}
\newcommand{\eqsepand}{\quad \text{and} \quad}

\usepackage{macros}

\makeatletter
\let\original@l@section\l@section
\renewcommand*{\l@section}[2]{{\let\addvspace\@gobble\original@l@section{#1}{#2}}}
\makeatother

\newcommand{\rev}[1]{\ifmmode\expandafter\mathcolor\else\expandafter\textcolor\fi{blue}{#1}}

\title{On an open question about the equivalence of monotone maximality and surjectivity condition in~$\CAT(0)$ spaces:\\ Negative and partially positive answers}

\author{Parin Chaipunya}

\affil{
{Department of Mathematics, Faculty of Science, King Mongkut's University of Technology Thonburi},

{126 Pracha Uthit Rd.},
{Bang Mod, Thung Khru, Bangkok},
{10140}, 
{Thailand.}

Email: parin.cha@kmutt.ac.th 
}

\date{}

\begin{document}
\maketitle 
\thispagestyle{draftfirstpage}

\begin{abstract}
  In a Hilbert space, it is known that a monotone operator is maximal if and only if the \emph{surjectivity condition}, \ie its resolvent has a full domain, holds.
  This equivalence is an open question in a~$\CAT(0)$ space, knowing only that the surjectivity condition implies the monotone maximality.
  In fact, there have been three approaches to generalize the monotonicity from a Hilbert space into a~$\CAT(0)$ space~$X$ --- \textbf{A:} the ones of dual space~$X^{\ast}$, \textbf{B:} of linear dual space~$X^{\diamond}$, and \textbf{C:} of tangent bundle~$TX$.
  In this paper, we answer \emph{negatively} on the first two approaches by constructing counterexamples on an~$\R$-tree.
  We then go on proving that the equivalence is \emph{affirmative} on the same~$\R$-tree, without any assumption on the domain.
  Moreover, we show that the answer is still \emph{positive} in some other cases, through a flat slice principle that transfers Minty's principle from Hilbert spaces to flat convex subsets, and through Minty's principle on proper Hilbert-Hadamard spaces satisfying a convex half-space condition.
  The question of equivalence \emph{remains open} under the general setting with the tangential approach.
	
	\noindent{\bfseries Keywords:} Maximally monotone operators; Surjectivity condition; Minty's principle; $\CAT(0)$ space;
	
\end{abstract}

\normalsize

{
  \footnotesize
  \tableofcontents
}

\section{Introduction}

There have been three main approaches in the literature to generalize the concept of monotone operators from a Hilbert space into a complete~$\CAT(0)$ space~$X$.
Each of these approaches considers a set-valued map~$A : X \multimap Y$ with different choices of the codomain~$Y$.
Let us describe each of the settings and the corresponding \emph{resolvent operator}~$J_{A}^{\lambda} : X \multimap X$, where~$\lambda > 0$ is arbitrary, as follows.
\paragraph{Approach A.}
This approach is due to the study of \citet{KR} where one sets~$Y = X^{\ast}$ as the dual space in the sense of \citet{AA}.
The operator~$A : X \multimap X^{\ast}$ is said to be \emph{monotone} if~
\begin{align} \label{eq: monotone dual}
  \langle x^{\ast}, \vec{yx} \rangle - \langle y^{\ast}, \vec{yx} \rangle \geq 0
  \eqsepquad
  \bigl( \forall (x, x^{\ast}), (y , y^{\ast}) \in \gr A\bigr)
  \eqfinp
\end{align}
The resolvent of~$A$ is given by~
\begin{align} \label{eq: resolvent dual}
  J_{A}^{\lambda}(x) := \set{ z \in X \mid \lambda^{-1}\vec{zx} \in Az } \eqfinp
\end{align}

\paragraph{Approach B.}
The second approch, due to \citet{CK}, is similar to the first one except that the linear dual space~$Y = X^{\diamond}$ is used instead of~$X^{\ast}$. 
The operator~$A$ is said to be \emph{monotone} if it satisfies~\eqref{eq: monotone dual}.
Note that~$\gr A$ is now a subset of~$X \times X^{\diamond}$ instead of~$X \times X^{\ast}$.
Its corresponding resolvent~$J_{A}^{\lambda}$ is also defined as~\eqref{eq: resolvent dual}.

\paragraph{Approach C.}
The last approach that uses the tangent bundle~$Y = TX$ was studied by \citet{CKK}.
We say that~$A$ is a \emph{monotone vector field} if~$Ax \subset T_{x}X$ for all~$x \in X$ and
\begin{align} \label{eq: mono tangential}
  \langle x^{\ast}, \log_{x} y \rangle + \langle y^{\ast}, \log_{y} x \rangle \leq 0
  \eqsepquad
  \bigl( \forall (x, x^{\ast}), (y , y^{\ast}) \in \gr A\bigr)
  \eqfinp
\end{align}
The resolvent of~$A$ is then defined by~$J_{A}^{\lambda}(x) := \set{ z \in X \mid \lambda^{-1} \log_{z} x \in Az }$.

According to~\cite{CKK}, the first two approaches are developed with the Hilbert space model in mind while in general a~$\CAT(0)$ space can be vastly different when there are branches and stricy curvatures.
In particular, the Approaches A and B both \emph{distinguish} the endpoints of two different geodesics that cannot be distinguished by the Alexandrov angle (and hence the Riemannian angle too).
The Approach C was therefore introduced to produce the first-order structure that depends more on the local structures and thus more coherent to how things are treated on Riemannian manifolds.

In each of the above approaches,~$A$ is said to be \emph{maximally monotone} if it is monotone and its graph is not properly contained in the graph of any other~$B$ that is monotone in the same approach.

When~$X = \mathcal{H}$ is a Hilbert space, then all of the three approaches could be identified with a self map~$A : \mathcal{H} \multimap \mathcal{H}$ and the monotonicity defined in each of the approaches reduces to the classical monotonicity
\begin{align*}
  \langle x^{\ast} - y^{\ast}, x - y \rangle \geq 0
  \eqsepquad
  \bigl( \forall (x, x^{\ast}), (y , y^{\ast}) \in \gr A\bigr)
  \eqfinp
\end{align*}
In this setting, the resolvent is~$J_{A}^{\lambda}(x) = \set{ z \in \mathcal{H} \mid x \in (I + \lambda A)(z) } = (I + \lambda A)^{-1}(x)$.
The resolvent is at most single-valued when~$A$ is monotone.
It has nonempty values everywhere if and only if~$(I + \lambda A)$ is surjective.
If~$A : \mathcal{H} \multimap \mathcal{H}$ is monotone,~$(I + \lambda A)$ is surjective for every~$\lambda > 0$ if and only if~$A$ is maximally monotone.
This result is known in the literature as the \emph{Minty's principle}~\cite{Minty}.

In all of the approaches, we say that~$A$ satisfies the \emph{surjectivity condition} if its resolvent~$J_{A}^{\lambda}$ has nonempty values for every~$\lambda > 0$ on the whole space~$X$.
This paper concerns with the following question.

\begin{question} \label{question}
  Suppose that~$X$ is a complete~$\CAT(0)$ space and~$A$ is monotone in either of the Approaches A--C.
  Is the surjectivity condition of~$A$ equivalent to its monotone maximality?
\end{question}

In fact, this question was posed by~\citet[Section~4]{KR},~\citet{CK} and~\citet{CKK}.
It was recently restated in~\cite[Question~4.2]{SL}, together with the question of whether the assumption~$\dom A = X$ can be omitted.
It is known that the surjectivity condition implies the maximality of~$A$ when it is monotone.
Moreover, it is known under all approches \cite{KR, CK, CKK} that the subdifferential, defined by the underlying approach, of a proper convex lower semicontinuous function is maximally monotone and enjoys the surjectivity condition.
In the general setting, the question remains open eversince.

The aim of this paper is to provide some answers to this question.
As it turns out, the answers are negative for the Approaches A and B.
On the contrary, the situation is very different in the Approach C.
We are able to prove in some special settings that the equivalence hold in this latter approach.
More precisely, our contributions are threefold.
First, for the Approaches A and B, we exhibit on an~$\R$-tree a monotone operator with full domain such that no monotone extension of it has a nonempty resolvent at the branch point.
Second, for the Approach C, we prove a flat slice principle which, under a compatibility condition, reduces the existence of resolvents on a flat convex subset to Minty's principle in a Hilbert space.
Applied to the same~$\R$-tree, the flat slice principle shows that every maximally monotone vector field satisfies the surjectivity condition, without any assumption on its domain, which also answers there the question raised in~\cite{SL} of whether the full domain assumption can be dropped.
Third, we prove Minty's principle for maximally monotone vector fields with full domain on proper Hilbert-Hadamard spaces satisfying a convex half-space condition, which covers closed convex subsets of hyperbolic spaces with nonempty boundary.
Ultimately, the Question~\ref{question} remains open in the general case under Approach C.

The paper is organized as follows.
Section~\ref{sec: preliminaries} recalls the basics of~$\CAT(0)$ spaces including its dual spaces and the tangent cones.
Section~\ref{sec: approaches A and B} treats the Approaches A and B.
Section~\ref{sec: approach C} treats the Approach C: the flat slice principle is proved in Section~\ref{subsec: flat slices} and applied to~$\Omega$ in Section~\ref{subsec: revisit Omega}, and Hilbert-Hadamard spaces are studied in Section~\ref{subsec: HH}.

\section{Preliminaries} \label{sec: preliminaries}
In this section, we recall some fundamental notions of~$\CAT(0)$ spaces and the development of its first-order structure.
One may resort to \citet{BH} and \cite{BBI} for general introduction to geodesic and~$\CAT(0)$ spaces, and to \citet{Bacak} for the development of convex optimization in~$\CAT(0)$ spaces.

We recall some fundamental concepts of~$\CAT(0)$ spaces and convex analysis in this space.
Consider a metric space~$(X,d)$.
A curve~$\gamma : \ccint{0,1} \to X$ is called a \emph{geodesic} joining~$x := \gamma(0)$ to~$y := \gamma(1)$ if~$d(\gamma(t), \gamma(t')) = \abs{t - t'}d(x, y)$ for all~$t, t' \in \ccint{0, 1}$.
It is a \emph{zero geodesic} if~$\gamma(0) = \gamma(1)$, otherwise it is said to be \emph{nonzero}.
If any two~$x, y \in X$ can be joined by a (unique) geodesic, we say that~$X$ is (uniquely) geodesic.
A uniquely geodesic metric space~$(X,d)$ is a \emph{$\CAT(0)$ space} if
\begin{align} \label{eq: CN ineq}
  d(\gamma(t),z)^{2} \leq (1-t)d(\gamma(0),z)^{2} + td(\gamma(1),z)^{2} - t(1-t)d(\gamma(0), \gamma(1))^{2}
\end{align}
for any~$z \in X$, any geodesic~$\gamma$, and any~$t \in \ccint{0,1}$.

\paragraph{Dual space.}
For any quadruple of points~$x,y,u,v$ in a metric space~$(X,d)$, the corresponding \emph{quasilinearization}, originally introduced by \citet{BN}, is defined by
\begin{align*}
  \langle \vec{xy}, \vec{uv} \rangle := \frac{1}{2} \bigl( d(x, v)^{2} + d(y,u)^{2} - d(x,u)^{2} - d(y,v)^{2}  \bigr) 
  \eqfinp
\end{align*}
The \emph{dual space}~$X^{\ast}$ of~$X$ was introduced by \citet{AA}.
It consists of equivalence classes~$[t \vec{xy}]$ under the equivalence relation~$\sim$, where~$t \in \R$ and~$x,y \in X$, and~$(t,x,y) \sim (s,u,v)$ if~$t\langle \vec{xy} , \vec{wz} = s \langle \vec{uv} , \vec{wz} \rangle$ for all~$w,z \in X$.
For simplicity, we write~$t\vec{xy}$ instead of~$[t\vec{xy}]$.

\paragraph{Linear dual space.}
By definition of~$\sim$, each element~$x^{\ast} = t\vec{xy} \in X^{\ast}$ is determined by the real-valued function
\begin{align*}
  (w,z) \mapsto \langle x^{\ast}, \vec{wz} \rangle := t \langle \vec{xy}, \vec{wz} \rangle
\end{align*}
on~$X \times X$, and two elements of~$X^{\ast}$ coincide if and only if the corresponding functions coincide.
We may thus regard~$X^{\ast}$ as a subset of the vector space~$\R^{X \times X}$ of all real-valued functions on~$X \times X$.
This subset is closed under multiplication by real numbers, since~$s(t\vec{xy}) = (st)\vec{xy}$, but in general it is not closed under addition.
Following~\citet{CK}, the \emph{linear dual space} of~$X$ is the linear span
\begin{align*}
  X^{\diamond} := \operatorname{span} X^{\ast} = \set[Big]{ \sum_{k=1}^{m} x_{k}^{\ast} \mid m \in \mathbb{N},\ x_{1}^{\ast}, \dots, x_{m}^{\ast} \in X^{\ast} } \subset \R^{X \times X}
  \eqfinp
\end{align*}
The pairing with~$X^{\ast}$ extends linearly to~$X^{\diamond}$, namely~$\bigl\langle \sum_{k} x_{k}^{\ast}, \vec{wz} \bigr\rangle := \sum_{k} \langle x_{k}^{\ast}, \vec{wz} \rangle$, which is well defined because elements of~$X^{\diamond}$ are functions on~$X \times X$.
In particular,~$X^{\ast} \subset X^{\diamond}$

\paragraph{Tangent cones.}
We now recall the first-order structure of a~$\CAT(0)$ space~$X$ given by its tangent cones, following the sources~\cite{BH, BBI, CKK}.
For~$x, y, z \in X$ with~$y \neq x \neq z$, let~$\bar{\angle}_{x}(y,z) \in \ccint{0,\pi}$ be the \emph{comparison angle}, \ie the angle at the vertex corresponding to~$x$ of a Euclidean triangle with side lengths~$d(x,y)$,~$d(x,z)$ and~$d(y,z)$.
Let~$\gamma$ and~$\eta$ be the nonzero geodesics joining~$x$ to~$y$ and~$x$ to~$z$, respectively.
Since~$X$ is a~$\CAT(0)$ space, the comparison angle~$\bar{\angle}_{x}(\gamma(t), \eta(t'))$ is nondecreasing in~$t, t' \in \ooint{0,1}$ \cite[Prop.~II.3.1]{BH}.
The \emph{Alexandrov angle} between~$y$ and~$z$ at~$x$ is therefore well defined by
\begin{align*}
  \angle_{x}(y,z) := \lim_{t, t' \downarrow 0} \bar{\angle}_{x}\bigl(\gamma(t), \eta(t')\bigr)
  \eqsepand
  \angle_{x}(y,z) \leq \bar{\angle}_{x}(y,z)
  \eqfinp
\end{align*}

The Alexandrov angle is a pseudometric on the set of nonzero geodesics issuing from~$x$.
Identifying two such geodesics whenever their angle is zero, we obtain a metric space whose completion~$S_{x}X$ is called the \emph{space of directions} at~$x$.
The direction of the geodesic joining~$x$ to~$y \neq x$ is denoted by~$\xi_{x}(y) \in S_{x}X$.
The \emph{tangent cone}~$T_{x}X$ at~$x$ is the Euclidean cone over~$S_{x}X$, that is, the quotient of~$\coint{0,\infty} \times S_{x}X$ obtained by collapsing~$\set{0} \times S_{x}X$ to a single point~$0$, called the \emph{apex}, endowed with the metric
\begin{align*}
  d_{T}\bigl( (s, \xi), (t, \eta) \bigr)^{2} := s^{2} + t^{2} - 2st \cos \angle(\xi, \eta)
  \eqfinp
\end{align*}
We write~$s\xi$ instead of the class of~$(s,\xi)$, and~$\norm{u} := d_{T}(0, u)$ for~$u \in T_{x}X$.
The multiplication~$\lambda(s\xi) := (\lambda s)\xi$ by~$\lambda \geq 0$ is well defined on~$T_{x}X$, and so is the \emph{product}
\begin{align*}
  \langle u, v \rangle := \frac{1}{2} \bigl( \norm{u}^{2} + \norm{v}^{2} - d_{T}(u,v)^{2} \bigr)
  \eqsepv
\end{align*}
which further yields~$\langle s\xi, t\eta \rangle = st \cos \angle(\xi, \eta)$.
It is positively homogeneous in each variable, but~$T_{x}X$ is in general not a linear space.
The \emph{tangent bundle} of~$X$ is the disjoint union~$TX := \bigsqcup_{x \in X} T_{x}X$.

For~$x, y \in X$, the \emph{logarithm map} (or the inverse of the \emph{exponential map} in the language of manifolds) is defined by~$\log_{x} y := d(x,y) \, \xi_{x}(y)$ if~$y \neq x$, and~$\log_{x} x := 0$.
Thus~$\norm{\log_{x} y} = d(x,y)$, and~$\log_{x} y$ points along the geodesic joining~$x$ to~$y$.
Moreover,~$\langle \log_{x} y, \log_{x} z \rangle = d(x,y) d(x,z) \cos \angle_{x}(y,z)$, and the inequality~$\angle_{x}(y,z) \leq \bar{\angle}_{x}(y,z)$ yields the comparison inequality
\begin{align} \label{eq: comparison tangent}
  \langle \log_{x} y, \log_{x} z \rangle \geq \langle \vec{xy}, \vec{xz} \rangle = \frac{1}{2} \bigl( d(x,y)^{2} + d(x,z)^{2} - d(y,z)^{2} \bigr)
  \eqfinp
\end{align}
When~$X = \mathcal{H}$ is a Hilbert space, the tangent cone~$T_{x}\mathcal{H}$ is identified with~$\mathcal{H}$ and~$\log_{x} y$ with~$y - x$.
When~$X$ is a Hadamard manifold, the tangent cone~$T_{x}X$ is identified with the usual tangent space, and~$\log_{x}$ with the inverse of the exponential map at~$x$ \cite{Sakai}.

We will also need the following continuity properties.
The Alexandrov angle~$\angle_{x}(y,z)$ is upper semicontinuous as a function of~$(x,y,z)$ on the set where~$x \notin \set{y,z}$, and, for a fixed~$x$, the function~$(y,z) \mapsto \angle_{x}(y,z)$ is continuous on~$(X \setminus \set{x})^{2}$ \cite[Prop.~II.3.3]{BH}.
Consequently, for fixed~$y,z \in X$, the function~$x \mapsto \langle \log_{x} y, \log_{x} z \rangle$ is lower semicontinuous on~$X$.
Indeed, at~$x \notin \set{y,z}$, it equals~$d(x,y)d(x,z)\cos\angle_{x}(y,z)$, where the distances are continuous and positive, and~$\cos\angle_{x}(y,z)$ is lower semicontinuous because the cosine is decreasing on~$\ccint{0,\pi}$.
At~$x \in \set{y,z}$, it vanishes, while its absolute value is at most~$d(x,y)d(x,z)$, which tends to~$0$ as~$x$ approaches~$y$ or~$z$.

Moreover, for fixed~$x \in X$ and~$v \in T_{x}X$, the function~$y \mapsto \langle v, \log_{x} y \rangle$ is continuous on~$X$.
To see this, first note that~$\log_{x} : X \to T_{x}X$ is continuous, since the inequality~$\angle_{x}(y,z) \leq \bar{\angle}_{x}(y,z)$ gives
\begin{align*}
  d_{T}(\log_{x} y, \log_{x} z)^{2} = d(x,y)^{2} + d(x,z)^{2} - 2d(x,y)d(x,z)\cos\angle_{x}(y,z) \leq d(y,z)^{2}
  \eqfinp
\end{align*}
Then the claim follows from the identity~$\langle v, \log_{x} y \rangle = \frac{1}{2}\bigl( \norm{v}^{2} + d(x,y)^{2} - d_{T}(v, \log_{x} y)^{2} \bigr)$, whose RHS is continuous in~$y$.

\section{Approaches A and B: Negative answers} \label{sec: approaches A and B}

Throughout the section, suppose that~$\Omega$ is constructed by gluing three copies of the ray~$\R_{+}$ at its zero~$0$.
Let us denote by~$O$ its branch point and for~$i = 1,2,3$, let~$e_{i}$ the point of unit distance from~$O$ on the ray~$i$.
Then we may uniquely express each point~$x_{i} \neq O$ in the ray~${i}$ by~$re_{i}$, where~$r > 0$ is the length away from~$O$.
We write~$e_{i} := 1e_{i}$.
The distance on~$\Omega$ is then define by~$\rho(O,re_{i}) = r$,~$\rho(re_{i}, r'e_{i}) = \abs{r - r'}$, and~$\rho(re_{i}, r'e_{j}) = r + r'$ for~$r, r' > 0$,~$i,j = 1,2,3$ with~$i \neq j$.
Then~$(\Omega, \rho)$ is a complete~$\CAT(0)$ space.

Let us fix an element~$x^{\ast} := \vec{oe_{1}} \in \Omega^{\ast}$.
The following calculations are obvious and we state them without a proof.
\begin{lemma} \label{lem: aux}
  Put~$h(x) := \frac{1}{2} \bigl( \rho(O,x)^{2} - \rho(e_{1},x)^{2} \bigr)$ for~$x \in \Omega$.
  Then
  \begin{enumerate}[label=\upshape(\alph*), leftmargin=*]
    \item \label{conc a: lem aux}
      $\langle x^{\ast} , uv \rangle = h(v) - h(u)$ for all~$u,v \in \Omega$.
    \item \label{conc b: lem aux}
      For any~$r > 0$,~$h(re_{1}) = r - \frac{1}{2}$ and~$h(re_{i}) = -r - \frac{1}{2}$ for~$i = 2,3$.
    \item \label{conc c: lem aux}
      For any~$z,u \in \Omega$ and~$\lambda > 0$,~$\langle \lambda^{-1}\vec{zO} , \vec{uz} \rangle = \frac{1}{2\lambda} \bigl( \rho(O,u)^{2} - \rho(z,u)^{2} - \rho(O,z)^{2} \bigr)$.
  \end{enumerate}
\end{lemma}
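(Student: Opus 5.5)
The plan is to unfold the definition of the quasilinearization in each part and then read off distances on~$\Omega$ from the explicit formula for~$\rho$. Note that the point written~$o$ in~$x^{\ast} = \vec{oe_{1}}$ is the branch point~$O$.

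For~\ref{conc a: lem aux}, with~$x^{\ast} = \vec{Oe_{1}}$, the definition gives
\begin{align*}
  \langle \vec{Oe_{1}}, \vec{uv} \rangle = \tfrac{1}{2}\bigl( \rho(O,v)^{2} + \rho(e_{1},u)^{2} - \rho(O,u)^{2} - \rho(e_{1},v)^{2} \bigr)
  \eqfinp
\end{align*}
Grouping the terms in~$v$ gives~$\frac{1}{2}\bigl(\rho(O,v)^{2} - \rho(e_{1},v)^{2}\bigr) = h(v)$. Grouping the terms in~$u$ gives~$-\frac{1}{2}\bigl(\rho(O,u)^{2} - \rho(e_{1},u)^{2}\bigr) = -h(u)$. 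This is exactly~$h(v) - h(u)$. No property of~$\Omega$ is used, so the identity holds in any metric space.

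For~\ref{conc b: lem aux}, I substitute the distances.
\begin{itemize}
  \item On ray~$1$, $\rho(O,re_{1}) = r$ and~$\rho(e_{1},re_{1}) = \abs{r-1}$. Hence~$h(re_{1}) = \frac{1}{2}\bigl(r^{2} - (r-1)^{2}\bigr) = r - \frac{1}{2}$.
  \item On ray~$i \in \{2,3\}$, $\rho(O,re_{i}) = r$ and~$\rho(e_{1},re_{i}) = 1 + r$. Hence~$h(re_{i}) = \frac{1}{2}\bigl(r^{2} - (1+r)^{2}\bigr) = -r - \frac{1}{2}$.
\end{itemize}

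For~\ref{conc c: lem aux}, pull out the scalar~$\lambda^{-1}$ and apply the definition with~$(x,y,u,v) = (z,O,u,z)$:
\begin{align*}
  \langle \vec{zO}, \vec{uz} \rangle = \tfrac{1}{2}\bigl( \rho(z,z)^{2} + \rho(O,u)^{2} - \rho(z,u)^{2} - \rho(O,z)^{2} \bigr)
  \eqfinp
\end{align*}
The first term vanishes, and multiplying by~$\lambda^{-1}$ gives the stated formula.

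There is no real obstacle. The only points needing care are the correct placement of indices in the quasilinearization and the case~$r<1$ in~\ref{conc b: lem aux}; the latter is harmless because~$\abs{r-1}^{2} = (r-1)^{2}$.
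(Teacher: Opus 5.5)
Your proposal is correct: each item follows by unfolding the quasilinearization (with $x^{\ast} = \vec{Oe_{1}}$) and substituting the explicit tripod distances, and your index placements and arithmetic all check out. The paper states this lemma without proof, as an obvious calculation, and your direct verification is exactly the computation it leaves to the reader.
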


\begin{lemma} \label{lem: anti mono}
  Let~$\lambda > 0$.
  For any~$z \in \Omega$, there exists~$u \in \Omega$ such that 
  \begin{align} \label{eq: anti mono}
    \langle \lambda^{-1} \vec{zO} , \vec{uz} \rangle - \langle x^{\ast} , \vec{uz} \rangle < 0
    \eqfinp
  \end{align}
\end{lemma}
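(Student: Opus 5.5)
Using Lemma~\ref{lem: aux}, the left-hand side of \eqref{eq: anti mono} equals
\begin{align*}
  F(u) := \frac{1}{2\lambda}\bigl(\rho(O,u)^{2} - \rho(z,u)^{2} - \rho(O,z)^{2}\bigr) - h(z) + h(u)
  \eqfinv
\end{align*}
by parts \ref{conc a: lem aux} and \ref{conc c: lem aux}. The plan is to exhibit, for each $z$, an explicit $u$ with $F(u)<0$. We split according to the position of $z$ and use part \ref{conc b: lem aux} to evaluate $h$; note also $h(O) = -\tfrac12$.

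\emph{Case $z = O$.} Here $F(u) = h(u) + \tfrac12$, since the $\lambda$-term vanishes identically: $\rho(O,u)^2-\rho(O,u)^2-0=0$. Choosing $u = re_{2}$ with any $r>0$ gives $F(u) = -r < 0$.

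\emph{Case $z = se_{i}$ with $s>0$.} Pick a ray $j \neq i$ with $j \neq 1$; this is possible since there are three rays. Take $u = re_{j}$, $r>0$. Then $\rho(z,u) = r+s$, so the $\lambda$-term equals $\frac{1}{2\lambda}(r^{2} - (r+s)^{2} - s^{2}) = -\frac{s(r+s)}{\lambda}$. Also $h(u) = -r-\tfrac12$, and $h(z)$ equals either $s-\tfrac12$ (if $i=1$) or $-s-\tfrac12$ (if $i\in\{2,3\}$). Thus
\begin{align*}
  F(u) = -\frac{s(r+s)}{\lambda} - r - h(z) - \tfrac12 \le -\frac{s(r+s)}{\lambda} - r + s
  \eqfinp
\end{align*}
Letting $r \to \infty$ makes this negative, because $-r(1 + s/\lambda)$ dominates. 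Concretely, any $r > s$ works: then $-r+s<0$ while the first term is already negative.

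There is no real obstacle here. The only care needed is the sign bookkeeping in $h$ and the choice of a ray $j$ different from both $1$ and $i$, which uses the three-branch structure. Even $u = O$ might suffice in some subcases, but the uniform choice $u = re_{j}$ with $r > s$ handles all of them at once.
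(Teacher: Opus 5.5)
Your proof is correct and follows essentially the same route as the paper: rewrite the left-hand side via Lemma~\ref{lem: aux}, take $u = re_{2}$ when $z = O$, and for $z$ on ray $i$ take $u$ on a ray $j \notin \{1, i\}$. The paper puts $u$ at distance $\rho(O,z)$ from $O$, and your choice $r > s$ works equally well. Your rule $j \notin \{1,i\}$ also avoids a slip in the paper's Case~III: its stated choice $u = re_{2} = z$ gives $\Delta(z,u) = 0$, whereas the intended choice $u = re_{3}$ gives $-2r^{2}/\lambda < 0$.
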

\begin{proof}
  For any~$z, u \in \Omega$, by~\ref{conc a: lem aux} and~\ref{conc c: lem aux} of Lemma~\ref{lem: aux}, the LHS of~\eqref{eq: anti mono} equals
  \begin{align*}
    \Delta(z,u) := \frac{1}{2\lambda} \bigl( \rho(O,u)^{2} - \rho(z,u)^{2} - \rho(O,z)^{2} \bigr) - \bigl( h(z) - h(u) \bigr)
    \eqfinp
  \end{align*}
  We consider the different cases of~$z$.
  \proofstep{Case I: $z = O$.} Choose~$u = e_{2}$ and one obtains~$\Delta(z,u) = -1$.
  \proofstep{Case II: $z = re_{1}$,~$r > 0$.} Take~$u = re_{2}$ and one gets~$\Delta(z,u) = -\frac{2r^{2}}{\lambda} - 2r < 0$.
  \proofstep{Case III: $z = re_{2}$,~$r > 0$.} Take~$u = re_{2}$ and we obtain~$\Delta(z,u) = -\frac{2r^{2}}{\lambda} - 2r < 0$.
  \proofstep{Case IV: $z = re_{3}$,~$r > 0$.} Take~$u = re_{2}$ and hence~$\Delta(z,u) = -\frac{2r^{2}}{\lambda} < 0$. \\
  It thus follows that for any~$z \in \Omega$, there exists~$u \in \Omega$ in which~\eqref{eq: anti mono} holds.
\end{proof}

The following theorem gives a negative answer to Question~\ref{question}.
\begin{theorem} \label{thm: main dual}
  Define~$A : \Omega \multimap \Omega^{\ast}$ by~$Ax := \set{x^{\ast}}$ for all~$x \in \Omega$.
  Then
  \begin{enumerate}[label=\upshape(\alph*), leftmargin=*]
    \item \label{conc a: thm main dual}
      $A$ is monotone and~$\dom A = \Omega$.
    \item \label{conc b: thm main dual}
      If~$B : \Omega \multimap \Omega^{\ast}$ is a monotone extension of~$A$, then~$J_{B}^{\lambda}(O) = \emptyset$ for all~$\lambda > 0$.
  \end{enumerate}
\end{theorem}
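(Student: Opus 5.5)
For part \ref{conc a: thm main dual}, $\dom A = \Omega$ holds by definition, since $Ax = \set{x^{\ast}}$ is nonempty for every $x$. Monotonicity will follow because $A$ takes a single constant value. For $(x,x^{\ast}),(y,x^{\ast}) \in \gr A$, the left-hand side of~\eqref{eq: monotone dual} is $\langle x^{\ast}, \vec{yx}\rangle - \langle x^{\ast}, \vec{yx}\rangle = 0 \geq 0$. No computation on $\Omega$ is needed here.

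For part \ref{conc b: thm main dual}, I will argue by contradiction. Let $B \supset A$ be monotone, fix $\lambda > 0$, and suppose some $z \in J_{B}^{\lambda}(O)$. By~\eqref{eq: resolvent dual}, this means $\lambda^{-1}\vec{zO} \in Bz$. Since $\dom A = \Omega$, every $u \in \Omega$ satisfies $(u, x^{\ast}) \in \gr A \subset \gr B$. Monotonicity of $B$ applied to the pairs $(z, \lambda^{-1}\vec{zO})$ and $(u, x^{\ast})$ then gives
\begin{align*}
  \langle \lambda^{-1}\vec{zO}, \vec{uz} \rangle - \langle x^{\ast}, \vec{uz} \rangle \geq 0
  \eqsepquad
  (\forall u \in \Omega)
  \eqfinp
\end{align*}
Here I use~\eqref{eq: monotone dual} with $(x,x^{\ast})$ replaced by $(z,\lambda^{-1}\vec{zO})$ and $(y,y^{\ast})$ by $(u,x^{\ast})$, so the vector appearing is $\vec{uz}$. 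Lemma~\ref{lem: anti mono} supplies, for this $z$, a point $u$ at which the expression is strictly negative. This is a contradiction, so $J_{B}^{\lambda}(O) = \emptyset$.

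The only delicate point is checking the orientation convention in~\eqref{eq: monotone dual}: the roles of $x$ and $y$ must match the form $\vec{uz}$ used in Lemma~\ref{lem: anti mono}. If the orientation came out reversed, giving $\vec{zu}$, I would use the antisymmetry $\langle w^{\ast}, \vec{zu}\rangle = -\langle w^{\ast}, \vec{uz}\rangle$, which follows directly from the definition of quasilinearization. Choosing the other pair ordering then recovers the same inequality. Beyond this bookkeeping, all the substantive work is already contained in Lemma~\ref{lem: anti mono}.
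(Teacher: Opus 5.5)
Your proof is correct and matches the paper's argument: part (a) follows from the constancy of $A$, and part (b) pairs $(z,\lambda^{-1}\vec{zO}) \in \gr B$ with $(u,x^{\ast}) \in \gr A \subset \gr B$ and invokes Lemma~\ref{lem: anti mono}; your orientation check against~\eqref{eq: monotone dual} also comes out right. One caveat on the lemma you cite: its Case~III as printed takes $u = re_{2} = z$, which gives $\Delta(z,u) = 0$, but the witness $u = re_{3}$ gives $\Delta(z,u) = -2r^{2}/\lambda < 0$, so the lemma's statement, and hence your argument, still holds.
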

\begin{proof}
  \ref{conc a: thm main dual}
  Since~$A$ is a constant map, the LHS of~\eqref{eq: monotone dual} is always~$0$ and hence~$A$ is monotone.
  The fact that~$\dom A = \Omega$ is obvious.

  \ref{conc b: thm main dual}
  Let~$B$ be a monotone extension of~$A$ and suppose to the contrary that~$J_{B}^{\lambda}(O) \neq \emptyset$ for some~$\lambda > 0$.
  Suppose that~$z \in J_{B}^{\lambda}(O)$, which means~$\lambda^{-1}\vec{zO} \in Bz$.
  Let~$u \in \Omega$ be an element corresponding to~$z$ as in Lemma~\ref{lem: anti mono}.
  Since~$\set{x^{\ast}} = Au \subset Bu$, the inequality~\eqref{eq: anti mono} contradicts with the fact that~$B$ is monotone.
  Therefore~$J_{B}^{\lambda}(O) = \emptyset$ for all~$\lambda > 0$.
\end{proof}

\paragraph{Negative answers.}
Since every monotone operator~$A : \Omega \multimap \Omega^{\ast}$ admits a maximally monotone extension~$\hat{A}$, the above theorem yields~$J_{\hat{A}}^{\lambda}(O) = \emptyset$ for all~$\lambda > 0$.
This answers Question~\ref{question} \emph{negatively} for the Approach A.
Since~$\Omega^{\ast} \subset \Omega^{\diamond}$, the same proof and reasoning gives also a \emph{negative} answer for the Approach B.

\section{Approach C: Partial affirmative answers} \label{sec: approach C}
Take the complete~$\CAT(0)$ space~$(\Omega, \rho)$ as in the Section~\ref{sec: approaches A and B}.
The aim of this section is to show that the counterpart construction of Section~\ref{sec: approaches A and B} does not give a negative answer under the tangential Approach C.
Interestingly, we obtain a contrasting conclusion that any maximally monotone vector field~$A : \Omega \multimap T\Omega$ over the same space~$(\Omega, \rho)$ has the property that~$\dom J_{A}^{\lambda} = \Omega$ for all~$\lambda > 0$.
This is indeed an \emph{affirmative} answer for the space~$\Omega$.

\paragraph{Partial affirmative answers.}
This section gives \emph{partial affirmative} answers to Question~\ref{question} under the Approach C in different special cases.
In Section~\ref{subsec: flat slices}, we establish a flat slice principle (Theorem~\ref{thm: flat slice principle}), which transfers Minty's principle from a Hilbert space to a flat slice~$C \subset X$, provided that the restriction of~$A$ to~$C$ is compatible with~$A$.
In Section~\ref{subsec: revisit Omega}, we apply this principle to~$(\Omega, \rho)$ and show that every maximally monotone vector field on~$\Omega$ satisfies the surjectivity condition (Theorem~\ref{thm: maxim mono surj tripod}), without any assumption on its domain.
In Section~\ref{subsec: HH}, we turn to Hilbert-Hadamard spaces, whose tangent cones are flat, and prove the Minty's principle (Theorem~\ref{thm: Minty HH}) for maximally monotone vector fields with full domain on a proper Hilbert-Hadamard space satisfying a convex half-space condition.
Since~$\Omega$ is not a Hilbert-Hadamard space, the two results are complementary.

\subsection{Flat slices} \label{subsec: flat slices}
In fact, the affirmative answer is based on a more general situation where one could make a slice that is isometric to a closed convex set of a Hilbert space.
Let us give a formal definition to the idea.
\begin{definition} \label{def: flat slice}
  A closed convex subset~$C$ of a complete~$\CAT(0)$ space is called a \emph{flat slice} if there is an isometry~$\varphi$ from~$C$ onto a closed convex subset~$K$ of a Hilbert space~$\mathcal{H}$ with nonempty relative interior~$\relinterior K \neq \emptyset$.
\end{definition}
Suppose that~$C \subset X$ is a flat slice as in the above definition.
For any~$x,y,y' \in C$, the segments~$\ccint{x,y}$ and~$\ccint{x,y'}$ in~$C$ are mapped by~$\varphi$ to the segments~$\ccint{\varphi(x), \varphi(y)}$ and~$\ccint{\varphi(x), \varphi(y')}$ in~$K$, respectively.
The Alexandrov angle between them therefore reduces to the Euclidean angle between the vectors~$\varphi(y) - \varphi(x)$ and~$\varphi(y') - \varphi(x)$.
One consequently has
\begin{align*}
  \langle \log_{x} y , \log_{x} y' \rangle = \langle \varphi(y) - \varphi(x) , \varphi(y') - \varphi(x) \rangle
  \eqfinp
\end{align*}
By this identity, the map~$t \log_{x} y \mapsto t\bigl( \varphi(y) - \varphi(x) \bigr)$ is well-defined and preserves the product.
It therefore extends to an isometric of~$T_{x}C \subset T_{x}X$ onto the tangent cone~$T_{K}(\varphi(x)) := \cl \left[ \bigcup_{t \geq 0} t \bigl(K - \varphi(x)\bigr) \right]$, which is a closed convex cone in~$\mathcal{H}$.

It thus makes sense to identify~$C$ with~$K$ through~$\varphi$ and~$T_{x}C$ with~$T_{K}(\varphi(x))$ through the isometry described above.
In particular,~$\log_{x} y$ is identified with~$y - x$ and the product on~$T_{x}C$ is the product on~$\mathcal{H}$. 
For a vector field~$A : X \multimap TX$, put~$A_{C}(x) := Ax \cap T_{x} C$ for each~$x \in C$.
Through the above identification,~$A_{C}$ is viewed as a set-valued map from~$C \subset \mathcal{H}$ to~$\mathcal{H}$.

\begin{theorem} \label{thm: flat slice principle}
  Let~$A : X \multimap TX$ be a maximally monotone vector field and~$C \subset X$ a flat slice such that
  \begin{enumerate}[label=\upshape(\roman*), leftmargin=*]
    \item \label{cond a: thm flat slice principle}
      either $\dom A_{C} \cap \relinterior C \neq \emptyset$, where~$\dom A_{C} := \set{x \in C \mid A_{C}(x) \neq \emptyset}$, or~$C$ is isometric to a Hilbert space;
    \item \label{cond b: thm flat slice principle}
      for every~$x \in C$ and~$w \in T_{x}C$,
      \begin{align*}
        & \langle w, \log_{x} y \rangle + \langle v, \log_{y} x \rangle \leq 0 \quad \forall (y,v) \in \gr A_{C} \\
        & \quad \implies
        \langle w, \log_{x} y \rangle + \langle v, \log_{y} x \rangle \leq 0 \quad \forall (y,v) \in \gr A 
        \eqfinp
      \end{align*}
  \end{enumerate}
  Then~$J_{A}^{\lambda}(x) \neq \emptyset$ for all~$x \in C$ and~$\lambda > 0$.
\end{theorem}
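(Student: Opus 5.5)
Identify $C$ with $K\subset\mathcal{H}$ through $\varphi$, so that $A_{C}$ becomes a set-valued map $K\multimap\mathcal{H}$ with $A_{C}(x)\subset T_{K}(x)$. The plan is to build a maximally monotone operator on the Hilbert space out of $A_{C}$, apply Minty's principle there, and then use condition~\ref{cond b: thm flat slice principle} together with the maximality of $A$ to pull the resulting resolvent point back to $X$.

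\textbf{Step 1: monotonicity of $A_C$ in $\mathcal{H}$.} For $(x,u),(y,v)\in\gr A_{C}$ we have $\log_{x}y=y-x$ and $\log_{y}x=x-y$, and the tangent products are Hilbert inner products. Hence \eqref{eq: mono tangential} reads $\langle u-v,y-x\rangle\le 0$, so $A_{C}$ is monotone in the classical sense. Consider $B:=A_{C}+N_{K}$, where $N_{K}$ is the normal cone of $K$; in the Hilbert case $N_{K}=0$. I would choose a maximally monotone extension $\hat B$ of $A_{C}$ in $\mathcal{H}$, or better an extension $\tilde A$ of $A_{C}$ maximal among monotone operators with values $\tilde A(x)\subset T_{K}(x)$. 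The first task is to show that $B$ is maximally monotone on $\mathcal{H}$.

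\textbf{Step 2: maximality of $A_C$ as a vector field on $C$ (the main obstacle).} Take $x\in K$ and $w\in T_{K}(x)$ monotonically related to $\gr A_{C}$. Condition~\ref{cond b: thm flat slice principle} says $(x,w)$ is then monotonically related to all of $\gr A$, and maximality of $A$ gives $w\in Ax\cap T_{x}C=A_{C}(x)$. So $A_{C}$ is maximal among monotone operators with $\gr\subset\bigsqcup_{x\in K}\{x\}\times T_{K}(x)$.

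To upgrade this to maximality of $A_{C}+N_{K}$ in $\mathcal{H}$, I would use the decomposition $\mathcal{H}=T_{K}(x)\oplus N_{K}(x)$ in Moreau's sense. Suppose $(x,u)$ is monotonically related to $\gr(A_{C}+N_{K})$. First show $x\in K$: test against points of $\dom A_{C}$ with large normal vectors. Then write $u=P_{T_{K}(x)}u+P_{N_{K}(x)}u$ and show that the tangential part $w$ is monotonically related to $\gr A_{C}$, after which Step 2's argument gives $w\in A_{C}(x)$. This is the delicate point, and it is exactly where the interiority part of condition~\ref{cond a: thm flat slice principle} should enter. Alternatively I would try a Rockafellar sum theorem: if $\hat B$ is any maximal extension, $\hat B+N_{K}$ is maximal because $\dom\hat B\cap\relinterior K\neq\emptyset$. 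Then I would show that the tangential projection of $\hat B(x)+N_{K}(x)$ lies in $A_{C}(x)$, using Step 2. When $C\cong\mathcal{H}$, no sum rule is needed: $T_{K}(x)=\mathcal{H}$, and $A_{C}$ is directly maximally monotone.

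\textbf{Step 3: Minty and pull-back.} By Minty's principle, for $x\in K$ and $\lambda>0$ there is $z$ with $x\in z+\lambda(A_{C}+N_{K})(z)$. In particular $z\in K$ and $\lambda^{-1}(x-z)=a+n$ with $a\in A_{C}(z)$ and $n\in N_{K}(z)$. Since $x-z\in K-z\subset T_{K}(z)$, we get $\langle n,x-z\rangle\le0$. Pairing with $n$ gives $\lambda^{-1}\langle x-z,n\rangle=\langle a,n\rangle+\|n\|^{2}$, and $\langle a,n\rangle\le0$ because $a\in T_{K}(z)$. To conclude $n=0$, I would use the Moreau decomposition uniqueness: $\lambda^{-1}(x-z)\in T_{K}(z)$ has zero normal component, i.e.\ $P_{N_{K}(z)}(a+n)=0$, so it suffices that $a\perp n$. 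If this fails, I would instead derive $\lambda^{-1}(x-z)\in A_{C}(z)$ directly from maximality (Step 2) applied to the element $\lambda^{-1}(x-z)\in T_{K}(z)$: show it is monotonically related to $\gr A_{C}$ using $n\in N_{K}(z)$ and $y-z\in T_{K}(z)$ for $y\in K$. Then $\lambda^{-1}\log_{z}x=\lambda^{-1}(x-z)\in A_{C}(z)\subset Az$, so $z\in J_{A}^{\lambda}(x)$.
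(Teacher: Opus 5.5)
\textbf{The gap.} Your main line (the ``upgrade'' in Step~2, then Minty in Step~3) requires $A_{C}+N_{K}$ to be maximally monotone in $\mathcal{H}$. This is false in general, so Minty's principle cannot be applied to it.

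Here is a counterexample. Take $X=C=[0,\infty)$, so $K=[0,\infty)\subset\mathbb{R}$, and set $A(y)=\{-1\}$ for $y>0$ and $A(0)=\emptyset$.
\begin{itemize}
\item $A$ is a maximally monotone vector field. A pair $(0,w)$ with $w\in T_{0}X\cong[0,\infty)$ would need $(w+1)y\le 0$ for all $y>0$, which is impossible.
\item Condition (i) holds, and (ii) is trivial because $A_{C}=A$.
\item Nevertheless $\gr(A_{C}+N_{K})=\{(y,-1): y>0\}$. Both $(0,-1)$ and $(-5,-1)$ are monotonically related to this graph, so $A_{C}+N_{K}$ is not maximal.
\item Your sub-step ``first show $x\in K$'' also fails: when $\dom A_{C}$ avoids $\partial K$, the normal cone gives you no leverage.
\end{itemize}
Your alternative claim fails on the same example. $\hat B\equiv -1$ is a maximal extension of $A_{C}$, and $\hat B(0)+N_{K}(0)=(-\infty,-1]$ projects onto $T_{K}(0)=[0,\infty)$ as $\{0\}$, yet $A_{C}(0)=\emptyset$. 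The theorem's conclusion still holds here (take $z=x+\lambda$), so only your intermediate claims break. The Moreau attempt to force $n=0$ is likewise unsupported, and it is also unnecessary.

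\textbf{The repair.} You already have every ingredient, and assembled correctly they give exactly the paper's proof.
\begin{enumerate}
\item Normalize so that $\mathcal{H}$ is the affine hull of $K$. Then $\relinterior K=\interior K$, which is what Rockafellar's sum theorem needs; a relative interior in a larger ambient space is not enough.
\item Take a maximally monotone extension $\hat B$ of $A_{C}$ in $\mathcal{H}$. Since $\dom\hat B\supset\dom A_{C}$ meets $\interior K$, the sum $\hat B+N_{K}$ is maximal.
\item Apply Minty to $\hat B+N_{K}$, not to $A_{C}+N_{K}$. This gives $z\in K$ with $w:=\lambda^{-1}(x-z)=\hat a+n$, where $\hat a\in\hat B(z)$ and $n\in N_{K}(z)$.
\item Do not try to show $n=0$ or that anything projects into $A_{C}(z)$. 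Run your Step~3 fallback directly. First, $w\in T_{K}(z)$. For every $(y,v)\in\gr A_{C}\subset\gr\hat B$,
\[
\langle w-v,z-y\rangle=\langle \hat a-v,z-y\rangle+\langle n,z-y\rangle\ge 0 ,
\]
by monotonicity of $\hat B$ and because $y\in K$. So $(z,w)$ is monotonically related to $\gr A_{C}$.
\item Your Step~2 argument (condition (ii) plus maximality of $A$) then gives $w\in A(z)$, that is, $z\in J_{A}^{\lambda}(x)$.
\end{enumerate}
Your treatment of the case $C\cong\mathcal{H}$ is fine: there $A_{C}$ is itself maximally monotone by Step~2, and Minty applies directly.
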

\begin{proof}
  Let~$C$ be isometric to~$K$ as in the Definition~\ref{def: flat slice}.
  Without loss of generality, one may assume that the affine hull of~$K$ is a Hilbert space~$\mathcal{H}$ and so~$\relinterior K = \interior K$.

  Through the identification,~$A_{C}$ is a monotone operator from~$C$ into~$\mathcal{H}$.
  Extend it to a maximally monotone operator~$\tilde{A} : \mathcal{H} \multimap \mathcal{H}$.
  Consider~$N_{C}$ the normal cone of~$C$ which is also maximally monotone and with~$\dom N_{C} = C$.
  We show that~$B := \tilde{A} + N_{C}$ is maximally monotone on~$\mathcal{H}$.
  If~$C$ is isometric to a Hilbert space, then~$K$ is a closed affine subspace of~$\mathcal{H}$, because isometries between Hilbert spaces are affine.
  Hence~$C = K = \mathcal{H}$ under the above normalization, so that~$N_{C}(z) = \set{0}$ for all~$z \in \mathcal{H}$ and~$B = \tilde{A}$ is maximally monotone.
  Otherwise, since~$\gr A_{C} \subset \gr \tilde{A}$, we have~$\dom A_{C} \subset \dom \tilde{A}$, and hence~$\dom \tilde{A} \cap \relinterior C \neq \emptyset$ by~\ref{cond a: thm flat slice principle}.
  Since~$\relinterior C = \interior C = \interior \dom N_{C}$, we have~$\dom \tilde{A} \cap \interior \dom N_{C} \neq \emptyset$.
  Using Rockafellar's sum theorem \cite{R}, the operator~$B$ is then maximally monotone on~$\mathcal{H}$.
  Let~$x \in C$ and~$\lambda > 0$.
  From the Minty's principle~\cite{Minty} (over~$\mathcal{H}$),~$I + \lambda B$ is surjective.
  Hence, there exists~$z \in \mathcal{H}$ such that~$x \in z + \lambda B(z)$.
  Rearranging this inclusion gives
  \begin{align*}
    w := \lambda^{-1}(x - z) \in B(z) = \tilde{A}(z) + N_{C}(z)
    \eqfinp
  \end{align*}
  This implies that~$N_{C}(z) \neq \emptyset$, we have~$z \in C$.
  We may write~$w = \tilde{a} + n$ for some~$\tilde{a} \in \tilde{A}(z)$ and~$n \in N_{C}(z)$.
  Because~$x - z \in C - z \subset T_{z} C$ and~$T_{z} C$ is a cone, we have~$w \in T_{z} C$.

  We claim that~$\langle w - v , z - y \rangle \geq 0$ for any~$(y,v) \in \gr A_{C}$.
  Since~$(y,v) \in \gr A_{C} \subset \gr \tilde{A}$ and~$(z, \tilde{a}) \in \gr \tilde{A}$ and~$\tilde{A}$ is monotone, one obtains~$\langle \tilde{a} - v , z - y \rangle \geq 0$.
  Also, the fact that~$y \in C$ yields~$\langle n, y - z \rangle \leq 0$.
  Combining the two facts, one proves the claim by
  \begin{align*}
    \langle w - v, z - y \rangle = \langle \tilde{a} - v , z - y \rangle + \langle n , z - y \rangle \geq 0
    \eqfinp
  \end{align*}
  In view of the hypothesis~\ref{cond b: thm flat slice principle}, the claim is true for all~$(y, v) \in \gr A$.
  Since~$A$ is maximally monotone, we have~$w \in A(z)$.
  Through the identification of~$w$, we get that~$\lambda^{-1} \log_{z}x \in A(z)$.
  Equivalently, this means~$z \in J_{A}^{\lambda}(x)$ and the proof is thus complete.
\end{proof}

\subsection{A revisit to~$(\Omega, \rho)$} \label{subsec: revisit Omega}
Before we proceed, we note that the tangent cone~$T_{x}\Omega$ at~$x \neq \Omega \setminus \set{O}$ can be identified with the real line~$\R$.
This allows us to identify~$A(re_{i})$ with~$a_{i}(r) \subset \R$ when~$r > 0$.
The tangent cone~$T_{O}\Omega$ consists of vectors~$sd_{i}$ with~$s \geq 0$ and~$d_{i}$ is the unit direction of ray~$i$.
We also have~$\langle sd_{i}, td_{j} \rangle = st$ if~$i = j$, and~$\langle sd_{i}, td_{j} \rangle = -st$ if~$i \neq j$.

\begin{lemma} \label{lem: equiv mono tripod}
  The vector field~$A : \Omega \multimap T\Omega$ is monotone if and only if all of the following conditions hold for all~$i,j \in \set{1,2,3}$ and~$r, r' > 0$:
  \begin{enumerate}[label=\upshape(M-\alph*), leftmargin=*]
    \item \label{cond a: lem equiv mono tripod}
      $(a - a')(r - r') \geq 0$ for all~$a \in a_{i}(r)$ and~$a' \in a_{i}(r')$;
    \item \label{cond b: lem equiv mono tripod}
      $a + a' \geq 0$ for all~$a \in a_{i}(r)$ and~$a' \in a_{j}(r')$ whenever~$i \neq j$;
    \item \label{cond c: lem equiv mono tripod}
      if~$sd_{i} \in AO$ with~$s \geq 0$, then~$a \geq s$ for all~$a \in a_{i}(r)$ and~$a \geq -s$ for all~$a \in a_{j}(r)$ with~$i \neq j$.
  \end{enumerate}
\end{lemma}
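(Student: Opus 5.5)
The proof is a direct unwinding of the monotonicity inequality~\eqref{eq: mono tangential}, sorted by where the two base points sit in~$\Omega$. First we fix how the logarithm map looks on~$\Omega$ under the identifications of Section~\ref{subsec: revisit Omega}. For~$x = re_{i}$ and~$y = r'e_{i}$ on the same ray, with~$r, r' > 0$, the geodesic stays in ray~$i$, so~$\log_{x} y = r' - r \in \R$. For~$x = re_{i}$ and~$y = r'e_{j}$ with~$i \neq j$, the geodesic passes through~$O$ and heads toward the branch point, so~$\log_{x} y = -(r + r')$. For~$x = re_{i}$ we have~$\log_{x} O = -r$, and~$\log_{O}(re_{i}) = rd_{i}$. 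Here the orientation of~$T_{re_{i}}\Omega \cong \R$ is chosen so that the positive direction points away from~$O$; this is the convention under which~$a_{i}(r)$ is read.

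Next we go through all pairs~$(x,x^{\ast}),(y,y^{\ast}) \in \gr A$ by cases on~$x,y$.

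\emph{Case 1: same ray.} Take~$x = re_{i}$ and~$y = r'e_{i}$ with~$a \in a_{i}(r)$ and~$a' \in a_{i}(r')$. Then~\eqref{eq: mono tangential} reads~$a(r'-r) + a'(r-r') \le 0$, which is exactly~\ref{cond a: lem equiv mono tripod}.

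\emph{Case 2: different rays.} Take~$x = re_{i}$ and~$y = r'e_{j}$ with~$i \ne j$. Then~\eqref{eq: mono tangential} reads~$-a(r+r') - a'(r+r') \le 0$. Since~$r + r' > 0$, this is equivalent to~$a + a' \ge 0$, which is~\ref{cond b: lem equiv mono tripod}.

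\emph{Case 3: one point is~$O$.} Take~$x = O$ with~$x^{\ast} = sd_{k}$ and~$y = re_{i}$ with~$a \in a_{i}(r)$. The first term is~$\langle sd_{k}, rd_{i}\rangle$, which equals~$sr$ if~$k = i$ and~$-sr$ if~$k \ne i$. The second term is~$a\cdot(-r)$. Dividing by~$r > 0$ gives~$a \ge s$ when~$k = i$ and~$a \ge -s$ when~$k \ne i$. This is~\ref{cond c: lem equiv mono tripod}, with the roles of~$i$ and~$j$ relabelled.

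\emph{Case 4: both points are~$O$.} Here all logarithms vanish and the inequality holds trivially.

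Since every pair of points of~$\Omega$ falls into one of these cases, and each case is an equivalence with the corresponding condition, both directions of the lemma follow at once.

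The only real obstacle is getting the sign conventions right. The identification~$T_{re_{i}}\Omega \cong \R$ must be fixed consistently with how~$a_{i}(r)$ is used in the paper. One also has to check that the angle at~$re_{i}$ between the directions toward~$O$ and toward a point on another ray is~$0$, so that both logarithms are negative multiples of the same direction. We would verify this using the fact that the geodesic from~$re_{i}$ to~$r'e_{j}$ initially runs along ray~$i$ toward~$O$. Everything else is routine arithmetic.
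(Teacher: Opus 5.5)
Your proposal is correct and follows the paper's proof essentially line by line. It uses the same identifications of the logarithm maps on~$\Omega$ and the same case split: same ray, different rays, and one point at~$O$, with coinciding points trivial. Each case reduces to (M-a), (M-b) and (M-c) respectively, exactly as in the paper. One point deserves an explicit sentence, which the paper includes: \eqref{eq: mono tangential} is symmetric in the two pairs, and this is what justifies treating only the ordering~$x = O$ in your Case~3.
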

\begin{proof}
  Monotonicity requires~\eqref{eq: mono tangential} for all~$(x, x^{\ast}), (y, y^{\ast}) \in \gr A$.
  It holds trivially when~$x = y$, and it is symmetric in the two pairs.
  So, up to swapping~$x$ and~$y$, there are exactly three cases with~$x \neq y$, and we show that~\eqref{eq: mono tangential} in Cases~I, II and~III below is equivalent to~\ref{cond a: lem equiv mono tripod}, \ref{cond b: lem equiv mono tripod} and~\ref{cond c: lem equiv mono tripod}, respectively.
  Throughout,~$x = re_{i}$ with~$r > 0$, and~$x^{\ast} \in Ax$ is identified with~$a \in a_{i}(r)$.

  \proofstep{Case I: $y = r'e_{i}$ with~$r' > 0$.}
  Let~$y^{\ast} \in Ay$ be identified with~$a' \in a_{i}(r')$.
  Then~$\log_{x} y$ and~$\log_{y} x$ are identified with~$r' - r$ and~$r - r'$, respectively, so the LHS of~\eqref{eq: mono tangential} equals~$-(a - a')(r - r')$.
  Hence~\eqref{eq: mono tangential} is equivalent to~$(a - a')(r - r') \geq 0$.

  \proofstep{Case II: $y = r'e_{j}$ with~$r' > 0$ and~$j \neq i$.}
  Let~$y^{\ast} \in Ay$ be identified with~$a' \in a_{j}(r')$.
  The geodesic~$[x,y]$ passes through~$O$, so~$\log_{x} y$ and~$\log_{y} x$ are both identified with~$-(r + r')$, and the LHS of~\eqref{eq: mono tangential} equals~$-(a + a')(r + r')$.
  Hence~\eqref{eq: mono tangential} is equivalent to~$a + a' \geq 0$.

  \proofstep{Case III: $y = O$.}
  Let~$y^{\ast} \in AO$, so that~$y^{\ast} = sd_{k}$ for some~$k$ and~$s \geq 0$.
  Then~$\log_{x} O$ is identified with~$-r$ and~$\log_{O} x = rd_{i}$, so the LHS of~\eqref{eq: mono tangential} equals~$-ar + \langle sd_{k}, rd_{i} \rangle$, which is~$-r(a - s)$ if~$k = i$ and~$-r(a + s)$ if~$k \neq i$.
  Hence~\eqref{eq: mono tangential} is equivalent to~$a \geq s$ if~$k = i$, and to~$a \geq -s$ if~$k \neq i$.
\end{proof}

Let~$A : \Omega \multimap T\Omega$ be a maximally monotone vector field.
For~$i=1,2,3$, define
\begin{align*}
  D_{i} := \set{ r > 0 \mid a_{i}(r) \neq \emptyset }
  \eqsepand
  \alpha_{i} := \inf \bigcup_{r > 0} a_{i}(r)
  \eqfinv
\end{align*}
with the convention~$\inf \emptyset = +\infty$.

\begin{lemma} \label{lem: maxim mono tripod}
  Let~$A : \Omega \multimap T\Omega$ be a maximally monotone vector field.
  \begin{enumerate}[label=\upshape(\alph*), leftmargin=*]
    \item \label{conc a: lem maxim mono tripod}
      If~$\alpha_{i} = -\infty$ for some~$i$, then~$AO = \emptyset$ and~$D_{j} = \emptyset$ for all~$j \neq i$.
    \item \label{conc b: lem maxim mono tripod}
      If~$\alpha_{j} > -\infty$ for all~$j$ and~$D_{i} = \emptyset$, then for every~$s_{0} \geq 0$ there exists~$s \geq s_{0}$ such that~$sd_{i} \in AO$.
    \item \label{conc c: lem maxim mono tripod}
      If~$\alpha_{j} > -\infty$ for all~$j$ and~$D_{i} \neq \emptyset$, then~$\inf D_{i} = 0$.
  \end{enumerate}
\end{lemma}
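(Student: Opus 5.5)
All three parts rest on the same mechanism. We exhibit a candidate pair $(x,x^{\ast})$ that is monotonically related to every element of $\gr A$, using the characterization in Lemma~\ref{lem: equiv mono tripod}. Maximality then forces $(x,x^{\ast}) \in \gr A$, and conditions \ref{cond a: lem equiv mono tripod}--\ref{cond c: lem equiv mono tripod} give the conclusion.

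\textbf{(a)} Suppose $\alpha_{i} = -\infty$. Then $a_{i}(r)$ contains values $a$ that are arbitrarily negative.

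If $sd_{k} \in AO$, condition \ref{cond c: lem equiv mono tripod} requires $a \geq s$ when $k = i$ and $a \geq -s$ when $k \neq i$. Both bounds fail for sufficiently negative $a$, so $AO = \emptyset$.

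If $D_{j} \neq \emptyset$ for some $j \neq i$, pick $a' \in a_{j}(r')$. Condition \ref{cond b: lem equiv mono tripod} requires $a + a' \geq 0$ for all $a \in \bigcup_{r} a_{i}(r)$, which fails. Hence $D_{j} = \emptyset$.

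Maximality is not needed for this part.

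\textbf{(b)} Let $\alpha_{j} > -\infty$ for all $j$, let $D_{i} = \emptyset$, and fix $s_{0} \geq 0$. Choose
\[
s \geq \max\Bigl\{ s_{0},\ \max_{j \neq i} (-\alpha_{j}),\ 0 \Bigr\}.
\]
We check that $(O, sd_{i})$ is monotonically related to every $(y,v) \in \gr A$.

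For $y = re_{j}$ with $j \neq i$, Case III of the proof of Lemma~\ref{lem: equiv mono tripod} requires $a \geq -s$ for $a \in a_{j}(r)$. This holds because $a \geq \alpha_{j} \geq -s$.

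For $y = re_{i}$ there is nothing to check, since $D_{i} = \emptyset$.

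For $y = O$ the inequality holds trivially, as both logarithms vanish.

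By maximality, $sd_{i} \in AO$.

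\textbf{(c)} Let $\alpha_{j} > -\infty$ for all $j$, let $D_{i} \neq \emptyset$, and suppose for contradiction that $\delta := \inf D_{i} > 0$. Fix $r_{0} \in (0,\delta)$. We add a pair $(r_{0}e_{i}, a)$ with $a \in \R$ sufficiently negative but still admissible. The constraints from Lemma~\ref{lem: equiv mono tripod} are as follows.
\begin{enumerate}[label=\upshape(\arabic*)]
  \item Condition \ref{cond a: lem equiv mono tripod} against $a' \in a_{i}(r')$ with $r' > r_{0}$ requires $a \leq a'$. This holds if $a \leq \alpha_{i}$, which is finite.
  \item Condition \ref{cond b: lem equiv mono tripod} against $a_{j}(r')$ with $j \neq i$ requires $a \geq -\alpha_{j}$ whenever $D_{j} \neq \emptyset$.
  \item Condition \ref{cond c: lem equiv mono tripod} against $sd_{k} \in AO$ requires $a \geq s$ if $k = i$ and $a \geq -s$ if $k \neq i$.
\end{enumerate}

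The main obstacle is showing that these constraints are compatible, i.e.\ that there is some $a$ with
\[
\max\Bigl\{ \max_{j \neq i} (-\alpha_{j}),\ \sup\{ s : sd_{i} \in AO\},\ \sup\{ -s : sd_{k} \in AO,\ k \neq i\} \Bigr\} \leq a \leq \alpha_{i}.
\]
The needed inequalities follow from monotonicity of the existing graph:
\begin{enumerate}[label=\upshape(\arabic*)]
  \item $\alpha_{i} + \alpha_{j} \geq 0$ for $j \neq i$ with $D_{j} \neq \emptyset$, by \ref{cond b: lem equiv mono tripod}.
  \item $\alpha_{i} \geq s$ whenever $sd_{i} \in AO$, by \ref{cond c: lem equiv mono tripod}.
  \item $\alpha_{i} \geq -s$ whenever $sd_{k} \in AO$ with $k \neq i$, by \ref{cond c: lem equiv mono tripod}.
\end{enumerate}
So $a := \alpha_{i}$ satisfies all constraints.

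The pair $(r_{0}e_{i}, \alpha_{i})$ is therefore monotonically related to $\gr A$, and maximality gives $r_{0} \in D_{i}$. This contradicts $r_{0} < \inf D_{i}$, so $\inf D_{i} = 0$.
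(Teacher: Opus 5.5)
Your proof is correct. Parts (a) and (c) match the paper's arguments. In (c) the paper likewise inserts $(re_i,\alpha_i)$ with $r<\inf D_i$ and verifies (M-a)--(M-c) through the same three compatibility inequalities you list.

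The real difference is in part (b). The paper argues by contradiction. It assumes $\sigma := \sup\{s \geq 0 \mid sd_i \in AO\} < +\infty$ and inserts a test pair on ray $i$, namely $(e_i, a)$ with $a := \max\{0, \sigma, -\alpha_j : j \neq i\}$. Checking this pair against the elements $sd_i \in AO$ is exactly where $\sigma$ is needed. Maximality then forces $1 \in D_i$, contradicting $D_i = \emptyset$.

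You instead place the test pair at the branch point itself, $(O, sd_i)$. Because $\log_O O = 0$, it relates trivially to every other element of $AO$. The only constraint comes from the rays $j \neq i$, namely $s \geq -\alpha_j$, and there is nothing to check on ray $i$ since $D_i=\emptyset$. Your route is shorter and needs neither the contradiction nor the auxiliary quantity $\sigma$. It also gives a stronger, explicit conclusion: $sd_i \in AO$ for every $s \geq \max\{0, -\alpha_j : j \neq i\}$. The paper's argument only shows that $\{s \mid sd_i \in AO\}$ is unbounded, which is all that is used later in Proposition~\ref{prop: max mono extensive}. Its one advantage is uniformity: it reuses the same ``insert a point on ray $i$'' mechanism as in part (c).
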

\begin{proof}
  \ref{conc a: lem maxim mono tripod}
  Let~$\alpha_{i} = -\infty$.
  Suppose that~$AO \neq \emptyset$ so that~$sd_{k} \in AO$ for some~$k$ and~$s \geq 0$.
  By Lemma~\ref{lem: equiv mono tripod}~\ref{cond c: lem equiv mono tripod}, every~$a \in a_{i}(r)$ with~$r > 0$ satisfies either~$a \geq s$ if~$k = i$ or~$a \geq -s$ otherwise.
  In both cases,~$a \geq -s$ and hence the infimum defining~$\alpha_{i}$ gives~$\alpha_{i} \geq -s > -\infty$. This contradicts with the hypothesis that~$\alpha_{i} = -\infty$, hence~$AO = \emptyset$.

  Suppose next that~$j \neq i$ and~$D_{j} \neq \emptyset$.
  So there are~$r' > 0$ and~$a' \in a_{j}(r')$.
  By Lemma~\ref{lem: equiv mono tripod}~\ref{cond b: lem equiv mono tripod},~$a + a' \geq 0$ for every~$a \in a_{i}(r)$ and every~$r > 0$.
  A similar trick shows~$\alpha_{i} \geq -a' > -\infty$, a contradiction.
  Thus we have~$D_{j} = \emptyset$ for all~$j \neq i$.

  \ref{conc b: lem maxim mono tripod}
  Suppose to the contrary, so that~$\sigma := \sup \set{ s \geq 0 \mid sd_{i} \in AO } < +\infty$.
  Put~$a := \max_{j \neq i} \set{0, \sigma, -\alpha_{j}}$ which is finite and nonnegative.
  We claim that the pair~$(e_{i}, a)$ relates monotonically to all elements in~$\gr A$.
  Let~$j \neq i$.
  Since~$\alpha_{j} > -\infty$,~$a_{j}(r') \neq \emptyset$ for some~$r' > 0$.
  We have~$a + a' \geq a + \alpha_{j} \geq 0$.
  It remains to show that~$(e_{i}, a)$ relates monotonically to elements at~$AO$ when it is nonempty.
  Suppose that~$sd_{k} \in AO$, where~$s \geq 0$.
  Since~$\log_{e_{i}}O$ is identified with~$-1$ and~$\log_{O}e_{i} = d_{i}$, the LHS of~\eqref{eq: mono tangential} for~$(e_{i}, a)$ and~$(o, sd_{k})$ is
  \begin{align*}
    -a + \langle sd_{k}, d_{i} \rangle = 
    \begin{cases}
      -a + s  &\text{if~$k = 1$} \eqfinv \\
      -a - s  &\text{otherwise} \eqfinp
    \end{cases}
  \end{align*}
  So the monotonic relation occurs if and only if one has~$a \geq s$ if~$k = 1$, and~$a \geq -s$ otherwise.
  In fact one always has~$a \geq 0 \geq -s$.
  For~$k = i$, we additionally observe that~$s \leq \sigma \leq a$ by definition of~$\sigma$.
  Thus the claim is proved.
  By monotone maximality,~$a \in a_{i}(1)$ and hence~$1 \in D_{i}$.
  This contradicts with~$D_{i} = \emptyset$.

  \ref{conc c: lem maxim mono tripod}
  Assume that~$r_{0} := \inf D_{i} > 0$ and fix~$r \in \ooint{0, r_{0}}$.
  We claim that~$(re_{i}, \alpha_{i})$ relates monotonically to all elements in~$\gr A$.
  For~$a' \in a_{i}(r')$, we have~$r' \geq r_{0} > r$ and~$a' \geq \alpha_{i}$.
  So~$(\alpha_{i} - a_{i})(r - r') \geq 0$.
  For~$a' \in a_{j}(r')$ with~$j \neq i$, we have~$\alpha_{i} + a' \geq \alpha_{i} + \alpha_{j} \geq 0$, where the last inequality follows from Lemma~\ref{lem: maxim mono tripod}~\ref{cond b: lem equiv mono tripod}.
  If~$AO \neq \emptyset$, take any~$sd_{k} \in AO$ where~$s \geq 0$.
  Lemma~\ref{lem: equiv mono tripod}~\ref{cond c: lem equiv mono tripod} gives~$\alpha_{i} \geq s$ for~$k = i$ and~$\alpha_{i} \geq -s$ otherwise.
  By maximality,~$\alpha_{i} \in a_{i}(r)$ and so~$r \in D_{i}$.
  This contradicts~$r < r_{0}$.
\end{proof}

Let~$L_{ij}$ be the geodesic line through rays~$i$ and~$j$ of~$\Omega$, with~$i \neq j$.
It is clearly a flat slice that is isometric to the real line~$\R$.
Suppose that the positive coordinates~$t > 0$ corresponds to the ray~$i$ and the negative coordinates~$t < 0$ to the ray~$j$, with~$t = 0$ identifying the branch point~$O$.

\begin{proposition} \label{prop: max mono extensive}
  Let~$A : \Omega \multimap T\Omega$ and let~$q \in \set{1,2,3}$ be such that~$\alpha_{q}= \min_{i=1,2,3} \alpha_{i}$.
  Then for each~$p \neq q$, the line~$C = L_{pq}$ satisfies~\ref{cond b: thm flat slice principle} of Theorem~\ref{thm: flat slice principle}.
\end{proposition}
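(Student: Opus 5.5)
The plan is to check condition~\ref{cond b: thm flat slice principle} directly; throughout I take $A$ maximally monotone, as in the surrounding discussion. Write $r$ for the index with $\{p,q,r\}=\{1,2,3\}$. The elements of $\gr A$ not already in $\gr A_{C}$ are of two kinds:
\begin{enumerate}
  \item pairs $(\rho e_{r}, a)$ with $\rho>0$ and $a\in a_{r}(\rho)$;
  \item pairs $(O, sd_{r})$ with $s\ge 0$.
\end{enumerate}
So I fix $x\in C$ and $w\in T_{x}C$ such that $(x,w)$ relates monotonically to $\gr A_{C}$. I then show it relates monotonically to these two extra families. The inequalities are computed exactly as in the proof of Lemma~\ref{lem: equiv mono tripod}, and they split according to $x=te_{p}$, $x=te_{q}$ (with $t>0$) or $x=O$.

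First I dispose of the degenerate cases with Lemma~\ref{lem: maxim mono tripod}.
\begin{itemize}
  \item If $\alpha_{q}=-\infty$, part~\ref{conc a: lem maxim mono tripod} gives $AO=\emptyset$ and $D_{p}=D_{r}=\emptyset$. Then $\gr A\subset\gr A_{C}$ and there is nothing to prove.
  \item Otherwise every $\alpha_{j}>-\infty$. Moreover $\alpha_{r}\ge\alpha_{q}$ by the choice of $q$.
\end{itemize}

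Next, for $x=te_{p}$ or $x=te_{q}$ and a pair $(\rho e_{r},a)$, the geodesic passes through $O$, and the inequality reduces to $w+a\ge 0$ (Case~II of Lemma~\ref{lem: equiv mono tripod}). For $x=te_{p}$, testing against the pairs $(\rho'e_{q},a')$ in $\gr A_{C}$ gives $w\ge-\alpha_{q}$. Hence $w+a\ge w+\alpha_{r}\ge w+\alpha_{q}\ge 0$, which is exactly where the minimality of $\alpha_{q}$ is used.
\begin{itemize}
  \item If $D_{q}=\emptyset$, I instead use Lemma~\ref{lem: maxim mono tripod}~\ref{conc b: lem maxim mono tripod}. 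It provides elements $sd_{q}\in AO$ with $s$ arbitrarily large, and testing $(x,w)$ against these gives the needed lower bound on $w$.
  \item For $x=te_{q}$, the analogous bound is $w\ge-\alpha_{p}$. To compare it with $a_{r}$, I would use \ref{cond b: lem equiv mono tripod} between rays $q$ and $r$, together with $(x,w)$ tested against $(O,sd_{q})$ and against points of ray $q$ itself. Here Lemma~\ref{lem: maxim mono tripod}~\ref{conc c: lem maxim mono tripod}, namely $\inf D_{q}=0$, lets me take points of ray $q$ arbitrarily close to $O$. Letting them tend to $O$ produces the bound $w\ge-\alpha_{q}$-type information in the limit.
\end{itemize}

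Then I treat the pairs $(O,sd_{r})$. For $x=te_{p}$ or $x=te_{q}$, the inequality reads $-wt-st\le 0$, i.e.\ $w\ge -s$. By \ref{cond c: lem equiv mono tripod} applied to rays $p,q$, any $a'\in a_{q}(\rho')$ satisfies $a'\ge -s$, but that goes the wrong way. So I would argue instead: if $w\ge 0$ we are done. If $w<0$, I would use maximality by testing whether $(O,s'd_{q})$ or $(O,s'd_{p})$ can be added to $A$. This should show that elements of $AO$ in direction $r$ force compensating elements in direction $q$ or $p$, which are in $\gr A_{C}$ and give $w\ge -s$. Finally, for $x=O$ and $w=ud_{p}$ or $w=ud_{q}$, the inequalities against the extra families are $a\ge -u$ and $\langle ud_{p}, sd_{r}\rangle + \langle sd_{r}, ud_{p}\rangle=-2us\le 0$. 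The latter is automatic. The former follows from $a\ge\alpha_{r}\ge\alpha_{q}$ and the hypothesis against ray $q$ (respectively ray $p$), as above.

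I expect the main obstacle to be the pairs $(O,sd_{r})$ tested against $x$ on the line with $w<0$, and likewise the bound for $x=te_{q}$ against ray $r$. The hypothesis only controls $w$ through the data on $C$, so the bound has to be manufactured from maximality. My first attempt would be a limiting argument $\rho\downarrow 0$ along ray $q$ using Lemma~\ref{lem: maxim mono tripod}~\ref{conc c: lem maxim mono tripod}, which relates $\alpha_{q}$ to the elements of $AO$ via \ref{cond c: lem equiv mono tripod}.
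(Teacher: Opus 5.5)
\textbf{There is a genuine gap, and it is the step you flag yourself.} For $x=te_{i}$ with $i\in\set{p,q}$ (and $m$ denoting your third index $r$), you never prove $w\ge -s$ against the pairs $(O,sd_{m})$. The estimate against $(\rho e_{m},a)$ at $x=te_{q}$ is also only gestured at.

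The missing idea is to test $(te_{i},w)$ against its \emph{own} ray rather than the opposite one.
\begin{enumerate}
\item First, $D_{i}\neq\emptyset$. Otherwise Lemma~\ref{lem: maxim mono tripod}(b) gives $s'd_{i}\in A_{C}(O)$ with $s'>w$. But the relation of $(te_{i},w)$ with $(O,s'd_{i})$ reads $-t(w-s')\le 0$, i.e.\ $w\ge s'$, a contradiction.
\item Next, Lemma~\ref{lem: maxim mono tripod}(c) gives a single $t_{1}\in D_{i}$ with $t_{1}<t$; no limit is needed. For $b\in a_{i}(t_{1})$, the relation with $(t_{1}e_{i},b)\in\gr A_{C}$ is $(w-b)(t-t_{1})\ge 0$, so $w\ge b\ge\alpha_{i}$.
\item This one bound closes both extra families, uniformly in $i$. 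By (M-b), $w+a\ge\alpha_{i}+\alpha_{m}\ge 0$. By (M-c), $w\ge\alpha_{i}\ge -s$.
\end{enumerate}
So (M-c) only seemed to go ``the wrong way'' because you combined it with the opposite-ray bound $w\ge-\alpha_{q}$. This is the paper's Case~I, and it never uses the minimality of $\alpha_{q}$. Minimality is needed only at $x=O$, where your argument essentially matches the paper's Case~II. Likewise, at $x=te_{q}$ the own-ray bound is $w\ge\alpha_{q}$, not ``$w\ge-\alpha_{q}$-type'' information. The latter can fail when $\alpha_{q}<0$; what closes the estimate is $\alpha_{q}+\alpha_{m}\ge 0$ from (M-b).

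\textbf{Your maximality idea can be completed, but as written it is only a hope.} By the Case~III computation of Lemma~\ref{lem: equiv mono tripod} together with maximality, $s'd_{k}\in AO$ exactly when $\alpha_{k}\ge s'$ and $\alpha_{j}\ge -s'$ for $j\neq k$. Put $\sigma:=\max\set{0,-\alpha_{q}}$. Then (M-b) and the minimality of $\alpha_{q}$ give $\sigma d_{p}\in A_{C}(O)$. Testing against it gives $w\ge-\sigma$ at both $te_{p}$ and $te_{q}$. Moreover $\alpha_{m}\ge\sigma$ whenever $D_{m}\neq\emptyset$, and $\sigma\le s$ whenever $sd_{m}\in AO$ by (M-c). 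This would give a different proof of Case~I that avoids Lemma~\ref{lem: maxim mono tripod}(b),(c). Your proposal, however, only says it ``should'' work, so it does not yet prove the statement.

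\textbf{Smaller slips to fix.}
\begin{itemize}
\item At $x=O$ with $w=ud_{q}$, testing against ray $p$ yields only $\alpha_{p}\ge -u$, which says nothing about $\alpha_{m}$. Test against ray $q$ instead: this gives $\alpha_{q}\ge u$, hence $a\ge\alpha_{m}\ge\alpha_{q}\ge u\ge -u$.
\item The relation of $(O,w)$ with $(O,sd_{m})$ is trivially $0$, since both logarithms equal $\log_{O}O=0$. The expression $-2us$ is not the monotonicity expression.
\item Your $D_{q}=\emptyset$ bullet at $x=te_{p}$ is wrong as stated, since testing against $(O,sd_{q})$ only gives $w\ge -s$. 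That case is vacuous anyway: $D_{m}\neq\emptyset$ forces $\alpha_{q}\le\alpha_{m}<+\infty$.
\end{itemize}
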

\begin{proof}
  Let~$m$ be the third index different from both~$p$ and~$q$.
  Elements of~$\gr A \setminus \gr A_{C}$ are the pairs~$(re_{m}, a')$ with~$r > 0$ and~$a' \in a_{m}(r)$, and the pairs~$(O, sd_{m}) \in \gr A$ with~$s > 0$.
  Let~$(z,w)$ be a pair with~$z \in C$ and~$w \in T_{w}C$ that relates monotonically to all elements of~$\gr A_{C}$.
  We claim that the relation extends to all the pairs in~$\gr A$.
  If~$\alpha_{q} = -\infty$, then~$AO = \emptyset$ and~$D_{m} = \emptyset$ by Lemma~\ref{lem: maxim mono tripod}~\ref{conc a: lem maxim mono tripod} and the claim holds.
  It remains to consider the case with~$\alpha_{j} > -\infty$ for all~$j$.

  \proofstep{Case I: $z \neq O$.}
  Since~$z \in C = L_{pq}$, we have~$z = te_{i}$ with~$i \in \set{p,q}$ and~$t > 0$.
  Identify~$w$ with~$c \in \R$.
  If~$D_{i} = \emptyset$, Lemma~\ref{lem: maxim mono tripod}~\ref{conc b: lem maxim mono tripod} produces~$sd_{i} \in AO$ with~$s > c$.
  This element lies in~$A_{C}O$ and the monotone relation means~$c \geq s$, which is absurd.
  So, it must be the case that~$D_{i} \neq \emptyset$ and Lemma~\ref{lem: maxim mono tripod}~\ref{conc c: lem maxim mono tripod} gives~$t_{1} \in D_{i}$ with~$t_{1} < t$.
  For~$b \in a_{i}(t_{1})$, the monotone relation yields~$(c - b)(t - t_{1}) \geq 0$.
  Hence~$c \geq b \geq \alpha_{i}$ and so~$c + a' \geq \alpha_{i} + \alpha_{m} \geq 0$ for all~$a' \in a_{m}(r)$.
  This shows that~$(z,w)$ is monotonically related to~$(re_{m}, a')$ with~$a' \in a_{m}(r)$.
  Next, if~$sd_{m} \in AO$ where~$s \geq 0$, Lemma~\ref{lem: equiv mono tripod}~\ref{cond c: lem equiv mono tripod} implies that~$-s \leq \alpha_{i} \leq c$.
  This is exactly the monotone relation of~$(z,w)$ to~$(O, sd_{m})$.

  \proofstep{Case II: $z = O$.}
  Again, identify~$w$ with~$c \in \R$.
  We only need to show that~$(z,w)$ is monotonically related to the pairs of the form~$(re_{m}, a')$ where~$r > 0$ and~$a' \in a_{m}(r)$. 
  Note that~$w$ never points towards the ray~$m$, which means~$\langle w, \log_{O} re_{m} \rangle = \langle w, rd_{m} \rangle = -r \abs{c}$ and the LHS of~\eqref{eq: mono tangential} for~$(O,w)$ and~$(re_{m}, a')$ is
  \begin{align*}
    \langle w, rd_{m} \rangle - a'r = -r (a' + \abs{c})
    \eqfinp
  \end{align*}
  Since~$r > 0$, the monotone relation of the above means~$a' \geq - \abs{c}$.

  Assume that~$D_{m} \neq \emptyset$, otherwise there is nothing to prove, so that we have~$\alpha_{q} \leq \alpha_{m} < +\infty$ and this means~$D_{q} \neq \emptyset$.
  Since~$(O,w)$ is related monotinically to elements of~$\gr A_{C}$ on ray~$q$, namely the pairs~$(\delta e_{q}, b)$ with~$\delta > 0$ and~$b \in a_{q}(\delta)$.
  As above,~$\log_{O} \delta e_{q} = \delta d_{q}$ and~$\log_{\delta e_{q}} O = - \delta$ so that the LHS of~\eqref{eq: mono tangential} reduces to~$\langle w, \delta d_{q} \rangle - b\delta = - \delta (c + b)$.
  Monotonicity gives~$b \geq -c$ for any such~$b$, and hence~$\alpha_{q} \geq -c$.
  We then obtain~$a' \geq \alpha_{m} \geq \alpha_{q} \geq -c \geq - \abs{c}$, as desired.
\end{proof}

\begin{theorem} \label{thm: maxim mono surj tripod}
  Every maximally monotone vector field~$A : \Omega \multimap T\Omega$ satisfies the surjectivity condition.
\end{theorem}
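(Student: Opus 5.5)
The plan is to apply the flat slice principle (Theorem~\ref{thm: flat slice principle}) to the lines $L_{pq}$ given by Proposition~\ref{prop: max mono extensive}. Fix a maximally monotone $A : \Omega \multimap T\Omega$ and choose $q \in \set{1,2,3}$ with $\alpha_{q} = \min_{i} \alpha_{i}$. Let $p, p'$ be the two indices different from $q$. Since
\begin{align*}
  \Omega = L_{pq} \cup L_{p'q}
  \eqfinv
\end{align*}
it suffices to show $J_{A}^{\lambda}(x) \neq \emptyset$ for every $x \in C$, where $C = L_{pq}$ with $p \neq q$ arbitrary, and every $\lambda > 0$.

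For such a $C$ we verify the two hypotheses of Theorem~\ref{thm: flat slice principle}. Condition~\ref{cond a: thm flat slice principle} holds trivially, because $C = L_{pq}$ is isometric to the Hilbert space $\R$, so no assumption on $\dom A_{C}$ is needed. Condition~\ref{cond b: thm flat slice principle} is exactly the content of Proposition~\ref{prop: max mono extensive}, applied with this choice of $q$. The theorem then yields $J_{A}^{\lambda}(x) \neq \emptyset$ for all $x \in C$ and $\lambda > 0$. Running over both lines through ray $q$ covers all of $\Omega$, and this is precisely the surjectivity condition. No assumption on $\dom A$ enters anywhere.

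The main point to double-check is that Proposition~\ref{prop: max mono extensive} genuinely uses maximality of $A$, through Lemma~\ref{lem: maxim mono tripod}, even though its statement does not say so explicitly. Maximality is indeed our standing hypothesis, so the proposition applies.

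A second point is that the minimizing index $q$ exists even when some $\alpha_{i} = \pm\infty$. This is fine because the $\alpha_{i}$ take values in $[-\infty, +\infty]$, so a minimum over three of them always exists. The degenerate case $\alpha_{q} = -\infty$ is already handled inside the proposition.

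Finally, in the proof of Theorem~\ref{thm: flat slice principle}, the step $w \in T_{z}C$ together with the identification of tangent cones on the line gives $\lambda^{-1}\log_{z} x \in Az$ with $z \in C$. This is what membership $z \in J_{A}^{\lambda}(x)$ requires.
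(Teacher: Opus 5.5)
Your proposal is correct and is essentially the paper's own proof. You fix $q$ with $\alpha_{q} = \min_{i}\alpha_{i}$, cover $\Omega$ by the two lines $L_{pq}$ with $p \neq q$, get condition~\ref{cond a: thm flat slice principle} from the Hilbert-space alternative and condition~\ref{cond b: thm flat slice principle} from Proposition~\ref{prop: max mono extensive}, and then apply Theorem~\ref{thm: flat slice principle} on each line. Your remark that the proposition tacitly relies on the maximality of $A$ through Lemma~\ref{lem: maxim mono tripod} is accurate, and that is exactly how the paper uses it.
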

\begin{proof}
  Let~$q$ be as in the Proposition~\ref{prop: max mono extensive}.
  Then~$\Omega$ is covered by~$\set{L_{pq}}_{p \neq q}$.
  Each~$L_{pq}$,~$p \neq q$, is isomorphic to a Hilbert space~$\mathcal{H} = \R$.
  Together with Proposition~\ref{prop: max mono extensive}, all the hypotheses of Theorem~\ref{thm: flat slice principle} are satisfied.
  Therefore~$J_{A}^{\lambda}(x) \neq \emptyset$ for all~$x \in L_{pq}$ and~$\lambda > 0$.
  Since every point of~$\Omega$ lies on~$L_{pq}$ for some~$p \neq q$, we conclude that~$J_{A}^{\lambda}(x) \neq \emptyset$ for all~$x \in \Omega$ and~$\lambda > 0$, \ie $A$ satisfies the surjectivity condition.
\end{proof}

\subsection{Hilbert-Hadamard spaces} \label{subsec: HH}

Recall that a~$\CAT(0)$ space is said to be \emph{flat} if the inequality~\eqref{eq: CN ineq} always holds as an equality.
In such a case, we know that~$X$ is isometric to some closed convex set~$K$ of a Hilbert space~$\mathcal{H}$~\cite[Remark~2.1]{SL}.

A complete~$\CAT(0)$ space is said to be a \emph{Hilbert-Hadamard space} if the tangent cone~$T_{x}X$ is flat at every~$x \in X$.
This notion was introduced by~\citet{GWY}, who required instead that each tangent cone~$T_{x}X$ embeds isometrically into a Hilbert space.
As noted in~\cite[Remark~2.1]{SL}, the two definitions are equivalent.
Without loss of generality, we assume that isometry goes through~$\iota_{x}$ which preserves the zero element of~$T_{x}X$.
Since~$\iota_{x}$ preserves the distance to the apex, it preserves~$\norm{u}$ for any~$u \in T_{x}X$.
By the cosine law, it also preserves the inner product.
Let us write~$\mathcal{H}_{x}X$ the \emph{tangent Hilbert space}, which is the Hilbert space spanned by the tangent cone.

For~$x \in X$ and~$g \in \mathcal{H}_{x}X$, define the \emph{open half-space}
\begin{align*}
  U_{x,g} := \set{ y \in X \mid \langle g , \log_{x} y \rangle > 0 }
  \eqfinp
\end{align*}

\begin{definition}
  A Hilbert-Hadamard space~$X$ satisfies the \emph{convex half-space condition} if~$U_{x,g}$ is convex for every~$x \in X$ and~$g \in \mathcal{H}_{x}X$.
\end{definition}

\citet{Horvath} has provided a framework in a general topological space to develop a version of KKM lemma.
The following lemma is a corollary of~\cite[Proposition 1]{Horvath} where the convex hull of~$N$ points plays the role of the~$N$-dimensional singular face of the original statement.
\begin{lemma} \label{lem: KKM}
  Let~$X$ be a complete~$\CAT(0)$ space and~$G : X \multimap X$ have closed values.
  Suppose that~$\conv \set{y_{1},\dots,y_{N}} \subset \bigcup_{i=1}^{N} G(y_{i})$ for any finite subset~$\set{y_{1},\dots,y_{N}} \subset X$.
  Then the family~$\set{ G(y) }_{y \in X}$ has the finite intersection property.
\end{lemma}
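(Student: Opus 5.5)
The plan is to derive Lemma~\ref{lem: KKM} from Horvath's abstract KKM theorem~\cite[Proposition~1]{Horvath}, by supplying it with a suitable family of ``convex hulls'' and checking that this family is contractible in Horvath's sense. Horvath works with a topological space~$X$ equipped with a map that assigns to each finite set~$F \subset X$ a nonempty contractible (more precisely, $n$-connected) subset~$\Gamma(F)$, monotone under inclusion. His conclusion is that a closed-valued KKM map, \ie one with~$\Gamma(F) \subset \bigcup_{y \in F} G(y)$ for every finite~$F$, has the finite intersection property.

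The first step is to choose~$\Gamma(F) := \conv F$, the convex hull of~$F$ in the complete~$\CAT(0)$ space~$X$. This is the smallest convex set containing~$F$, and it is clearly monotone in~$F$. With this choice the hypothesis of the lemma is exactly Horvath's KKM condition, so it only remains to verify his structural requirements.

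The second step is to show that~$\conv F$ is contractible. Here uniqueness of geodesics and convexity of the metric do the work. Fix~$y_{1} \in F$ and define~$h(x,t)$ as the point at parameter~$t$ on the geodesic from~$x$ to~$y_{1}$. By convexity of~$\conv F$, the homotopy~$h$ stays inside~$\conv F$. It is jointly continuous because the~$\CAT(0)$ inequality gives
\begin{align*}
  d\bigl(h(x,t), h(x',t')\bigr) \leq (1-t)\, d(x,x') + \abs{t - t'}\, d(x', y_{1})
  \eqfinp
\end{align*}
Since~$h$ contracts~$\conv F$ to~$y_{1}$, the set~$\conv F$ is contractible, and hence~$n$-connected for every~$n$. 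This is precisely what Horvath needs in order to build continuous maps from the standard simplex~$\Delta_{N-1}$ into~$\Gamma(F)$ that send each face spanned by a subset~$J$ into~$\Gamma(\set{y_{j}}_{j \in J})$.

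The third step is to apply~\cite[Proposition~1]{Horvath}. It yields that, for each finite~$\set{y_{1},\dots,y_{N}}$, there is a continuous~$f : \Delta_{N-1} \to X$ with~$f(\Delta_{J}) \subset \conv\set{y_{j}}_{j \in J}$. The sets~$f^{-1}(G(y_{i}))$ are then closed and cover each face~$\Delta_{J}$. The classical KKM lemma on the simplex gives a common point, and its image under~$f$ lies in~$\bigcap_{i} G(y_{i})$.

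The main obstacle is matching Horvath's hypotheses exactly: his ``singular faces'' must be compatible across faces. I would handle this by constructing~$f$ inductively over the skeleta of the simplex, using the contraction~$h$ restricted to each~$\conv\set{y_{j}}_{j \in J}$ to extend~$f$ from the boundary of a face to its interior (for instance by a cone construction toward a vertex). Monotonicity of~$\conv$ guarantees that the extension stays in the correct hull.
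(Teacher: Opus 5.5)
Your proposal is correct and follows the paper's route, which simply invokes Horvath's Proposition~1 with $\operatorname{conv}\{y_1,\dots,y_N\}$ in the role of the singular faces. You also make explicit what that one-line reduction leaves implicit: the contractibility of convex hulls via the geodesic homotopy, and the skeleton-wise construction of a face-compatible continuous map $f : \Delta_{N-1} \to X$, to which the classical KKM lemma then applies.

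Two small details need tidying, neither of which is a gap. First, on a face $\Delta_J$ the contraction must target a point of $\operatorname{conv}\{y_j\}_{j \in J}$, whereas your $h$ targets $y_1$, which may lie outside that hull when $1 \notin J$. Second, coning toward a vertex is only guaranteed to agree with the values already fixed on $\partial\Delta_J$ if the vertex is chosen consistently, e.g.\ always $\max J$, which yields the iterated-geodesic map; coning from the barycenter of $\Delta_J$ avoids this issue altogether.
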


Using this KKM lemma, we retrieve the Minty's principle in the setting of a Hilbert-Hadamard space.
The following characterization will be used in the proof of the Minty's principle.
\begin{lemma} \label{lem: characterization resolvent}
  Let~$X$ be a complete~$\CAT(0)$ space and~$A : X \multimap TX$ be a maximally monotone vector field.
  Then~$z \in J_{A}^{\lambda}(x)$ if and only if
  \begin{align} \label{eq: characterization resolvent}
    \lambda^{-1} \langle \log_{z} x , \log_{z} y \rangle + \langle v , \log_{y} z \rangle \leq 0
    \qquad \bigl( \forall (y,v) \in \gr A \bigr)
    \eqfinp
  \end{align}
\end{lemma}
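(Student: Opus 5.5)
The characterization follows directly from the definitions of the resolvent, of monotonicity in Approach~C, and of maximality. The key observation is that $\lambda^{-1}\log_{z} x$ belongs to $T_{z}X$, and that condition~\eqref{eq: characterization resolvent} says exactly that the pair $(z, \lambda^{-1}\log_{z}x)$ is monotonically related to every element of $\gr A$. Indeed, positive homogeneity of the product on $T_{z}X$ gives
\begin{align*}
  \langle \lambda^{-1}\log_{z} x, \log_{z} y\rangle = \lambda^{-1}\langle \log_{z} x, \log_{z} y\rangle
  \eqfinv
\end{align*}
so the left-hand side of~\eqref{eq: characterization resolvent} is precisely the left-hand side of~\eqref{eq: mono tangential} for the pairs $(z, \lambda^{-1}\log_{z}x)$ and $(y,v)$. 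I will first record this rewriting, and then prove the two implications separately.

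For the forward direction, suppose $z \in J_{A}^{\lambda}(x)$. By the definition of the resolvent in Approach~C, $\lambda^{-1}\log_{z}x \in Az$, so $(z, \lambda^{-1}\log_{z}x) \in \gr A$. Monotonicity of $A$, namely~\eqref{eq: mono tangential} applied to this pair and an arbitrary $(y,v) \in \gr A$, then gives~\eqref{eq: characterization resolvent} directly. This direction uses only monotonicity, not maximality.

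For the converse, assume~\eqref{eq: characterization resolvent} holds, and define $B$ by $\gr B := \gr A \cup \set{(z, \lambda^{-1}\log_{z}x)}$. Then $B$ is a vector field, since $\lambda^{-1}\log_{z} x \in T_{z}X$. To see that $B$ is monotone, three kinds of pairs must be checked.
\begin{enumerate}[label=\upshape(\roman*), leftmargin=*]
  \item Pairs of elements of $\gr A$ satisfy~\eqref{eq: mono tangential} because $A$ is monotone.
  \item The new element paired with any $(y,v) \in \gr A$ satisfies~\eqref{eq: mono tangential} by hypothesis~\eqref{eq: characterization resolvent}.
  \item The new element paired with itself gives $2\lambda^{-1}\langle \log_{z} x, \log_{z} z\rangle = 0$, since $\log_{z} z = 0$.
\end{enumerate}
Symmetry of~\eqref{eq: mono tangential} in the two pairs takes care of the ordering in each case. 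Hence $B$ is a monotone extension of $A$, and maximality forces $\gr B = \gr A$, that is, $\lambda^{-1}\log_{z}x \in Az$, so $z \in J_{A}^{\lambda}(x)$.

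No step here is a genuine obstacle. The only points needing care are the homogeneity identity for the product on $T_{z}X$, including the degenerate case $x = z$, where both sides vanish, and the self-pairing case~(iii).
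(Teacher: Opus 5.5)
Your proof is correct and is essentially the paper's argument. The forward direction uses only monotonicity. For the converse, you observe that \eqref{eq: characterization resolvent} says exactly that $(z,\lambda^{-1}\log_{z}x)$ is monotonically related to every element of the graph of $A$, and maximality then places it in $Az$. You make explicit several details the paper leaves implicit: the positive homogeneity of the product on $T_{z}X$, the one-point monotone extension $B$, and the trivial self-pairing case.
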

\begin{proof}
  Set~$u := \lambda^{-1} \log_{z} x$.
  Then~\eqref{eq: characterization resolvent} says that~$(z,u)$ is monotonically related to every~$(y,v) \in \gr A$.
  The maximality of~$A$ implies that~$u \in Az$, which means~$z \in J_{A}^{\lambda}(x)$.

  Conversely, suppose that~$z \in J_{A}^{\lambda}(x)$ which means~$u \in Az$.
  The conclusion follows from the monotonicity of~$A$.
\end{proof}

\begin{theorem}[Minty's principle] \label{thm: Minty HH}
  Let~$X$ be a proper Hilbert-Hadamard space satisfying the convex half-space condition.
  Then every maximally monotone vector field~$A : X \multimap TX$ with~$\dom A = X$ satisfies the surjectivity condition.
\end{theorem}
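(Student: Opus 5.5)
Fix $x \in X$ and $\lambda > 0$. The plan is to produce a point $z$ satisfying the characterization \eqref{eq: characterization resolvent} of Lemma~\ref{lem: characterization resolvent}, using the KKM Lemma~\ref{lem: KKM}. For each $(y,v) \in \gr A$ I define
\begin{align*}
  G(y,v) := \set{ z \in X \mid \lambda^{-1} \langle \log_{z} x , \log_{z} y \rangle + \langle v , \log_{y} z \rangle \leq 0 } \eqfinp
\end{align*}
Since $\dom A = X$, I index the family by $y \in X$ by setting $G(y) := \bigcap_{v \in Ay} G(y,v)$. A point in $\bigcap_{y} G(y)$ lies in $J_{A}^{\lambda}(x)$ by Lemma~\ref{lem: characterization resolvent}.

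\textbf{Closedness.} By the preliminaries, $z \mapsto \langle \log_{z} x, \log_{z} y \rangle$ is lower semicontinuous and $z \mapsto \langle v, \log_{y} z \rangle$ is continuous. The defining function of $G(y,v)$ is therefore l.s.c., and each $G(y,v)$, hence each $G(y)$, is closed.

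\textbf{KKM covering.} Let $z \in \conv\set{y_{1},\dots,y_{N}}$ and suppose $z \notin \bigcup_i G(y_i)$. Then there are $v_i \in Ay_i$ with $\lambda^{-1}\langle \log_z x, \log_z y_i\rangle + \langle v_i, \log_{y_i} z\rangle > 0$. Pick any $u \in Az$, which exists since $\dom A = X$. Monotonicity gives $\langle v_i, \log_{y_i} z\rangle \le -\langle u, \log_z y_i\rangle$, so
\begin{align*}
  \langle g, \log_z y_i \rangle > 0 \quad (i=1,\dots,N), \qquad g := \lambda^{-1}\log_z x - u \in \mathcal{H}_z X \eqfinp
\end{align*}
Here I use that the product is bilinear in the tangent Hilbert space $\mathcal{H}_zX$ (Hilbert-Hadamard). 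Thus every $y_i$ lies in the half-space $U_{z,g}$. This half-space is convex by hypothesis, so $z \in U_{z,g}$. But $\log_z z = 0$ gives $\langle g, 0\rangle = 0$, a contradiction. Hence the KKM hypothesis holds, and Lemma~\ref{lem: KKM} yields the finite intersection property of $\set{G(y)}_{y\in X}$.

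\textbf{From finite to full intersection.} This is the main obstacle: finite intersection only gives a full intersection if some $G(y)$ is compact. Properness reduces this to showing that some $G(y_0)$, or some finite intersection, is bounded. I would take $y_0 := x$ with $v_0 \in Ax$. For $z \in G(x,v_0)$, the first term is $\lambda^{-1} d(z,x)^2$, and
\begin{align*}
  \langle v_0, \log_x z\rangle \ge -\norm{v_0}\, d(x,z) \eqfinv
\end{align*}
so $\lambda^{-1}d(z,x)^2 \le \norm{v_0}\, d(x,z)$. Hence $G(x) \subset \bar B(x, \lambda\norm{v_0})$, which is compact by properness. Intersecting with this compact set, the finite intersection property gives $\bigcap_y G(y) \neq \emptyset$, and Lemma~\ref{lem: characterization resolvent} finishes the proof.

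A subtle point remains. Lemma~\ref{lem: KKM} applies to the family indexed by $y$, while $G(y)$ was defined as an intersection over all $v \in Ay$. The covering step above handles this correctly, because a point outside $G(y_i)$ fails the inequality for some particular $v_i \in Ay_i$, which is all the argument needs.
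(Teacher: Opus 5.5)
Your proof is correct and follows the paper's argument: it uses the same sets $G(y)=\bigcap_{v\in Ay}F(y,v)$, the same KKM covering step via $g=\lambda^{-1}\log_z x-u$ and the convex half-space condition, and compactness from properness. The only difference is minor: the paper shows every $G(y)$ is compact using the comparison inequality \eqref{eq: comparison tangent}, whereas you bound only $G(x)\subset \bar B(x,\lambda\norm{v_0})$ via $\langle\log_z x,\log_z x\rangle=d(z,x)^2$, which suffices for a family of closed sets with the finite intersection property.
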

\begin{proof}
  Let~$x \in X$ and~$\lambda > 0$.
  For~$(y,v) \in \gr A$, define
  \begin{align*}
    f_{y,v}(z) := \lambda^{-1} \langle \log_{z} x, \log_{z} y \rangle + \langle v, \log_{y} z \rangle
    \eqsepand
    F(y,v) := \set{z \in X \mid f_{y,v} \leq 0}
    \eqfinp
  \end{align*}
  By Lemma~\ref{lem: characterization resolvent},~$z \in J_{A}^{\lambda}(x)$ if and only if~$z \in F(y,v)$ for all~$(y,v) \in \gr A$.

  We claim that~$\bigcap_{(y,v) \in \gr A} F(y,v) \neq \emptyset$.
  First, define~$G(y) := \bigcap_{v \in Ay} F(y,v)$ for each~$y \in X$.
  Then~$G$ has closed values.
  Now, let~$y_{1}, \dots, y_{N} \in X$ and~$z \in \conv \set{y_{1}, \dots, y_{N}}$.
  Suppose that~$z \not\in G(y_{i})$ for any~$i=1,\dots,N$.
  Then for each~$i$, there exists~$v_{i} \in Ay_{i}$ such that~$f_{y_{i}, v_{i}}(z) > 0$.
  Since~$\dom A = X$,~$Az$ is nonempty and we pick an element~$u \in Az$.
  The monotonicity of~$A$ gives~$\langle v_{i}, \log_{y_{i}} z \rangle \leq - \langle u, \log_{z} y_{i} \rangle$. 
  Putting~$g := \lambda^{-1} \log_{z} x - u \in \mathcal{H}_{z}X$, we obtain
  \begin{align*}
    0 
    < 
    f_{y_{i}, v_{i}}(z) \leq \lambda^{-1} \langle \log_{z} x , \log_{z} y_{i} \rangle
    - \langle u, \log_{z} y_{i} \rangle
    =
    \langle g , \log_{z} y_{i} \rangle
    \eqfinp
  \end{align*}
  This means~$y_{i} \in U_{z, g}$ for each~$i$.
  Since~$U_{z,g}$ is convex, one gets~$z \in \conv\set{y_{1}, \dots, y_{N}} \subset U_{z,g}$.
  This is a contradiction because~$z$ never belongs to~$U_{z,g}$ by definition.
  Therefore,~$z \in \bigcup_{i=1}^{N} G(y_{i})$.
  Applying~Lemma~\ref{lem: KKM},~$\set{G(y)}_{y \in X}$ has the finite intersection property.

  One may further observe that~$f_{y,v}$ is lower semicontinuous, which makes~$F(y,v)$ a closed set.
  If~$X$ is bounded, then~$F(y,v)$ is compact since~$X$ is proper.
  If~$X$ is unbounded, one gets the estimate
  \begin{align*}
    f_{y,v}(z) \geq \frac{1}{2\lambda} \bigl( d(z,x)^{2} + d(z,y)^{2} - d(x,y)^{2} \bigr) - \norm{v} d(y,z)
  \end{align*}
  which shows that~$f_{y,v}(z) \to +\infty$ as~$d(z,y) \to +\infty$.
  Consequently, the set~$F(y,v)$ is contained in a ball of finite radius and hence is compact by the properness of~$X$.
  It follows that~$G(y)$ is compact for every~$y \in X$.

  Since~$\set{G(y)}_{y \in X}$ is a family of compact sets with the finite intersection property, we get
  \begin{align*}
    \bigcap_{(y,v) \in \gr A} F(y,v) = \bigcap_{y \in X} \bigcap_{v \in Ay} F(y,v) = \bigcap_{y \in X} G(y) \neq \emptyset
    \eqfinp
  \end{align*}
  Any element~$z \in \bigcap_{y \in X} G(y)$ satisfies~$z \in J_{A}^{\lambda}(x)$.
  Since~$x \in X$ and~$\lambda > 0$ are arbitrary, the conclusion follows.
\end{proof}

We note here that the convex half-space condition is not true for a~$\CAT(0)$ space in general.
In the following example, we present a Hilbert-Hadamard space that lacks the convex half-space condition but every maximally monotone vector field with full domain satisfies the surjectivity condition.
This also shows that the convex half-space condition is not necessary for the equivalence in the open question.
In the following examples, the readers are invited to consult~\cite{Sakai} for the notions used in the context of Riemannian geometry.

\begin{example} \label{ex: HH not convex half-space}
  Let~$X := \mathbb{H}^{2} \times \R$, which is a Hadamard manifold and hence a proper Hilbert-Hadamard space with~$T_{(p,t)}X = T_{p}\mathbb{H}^{2} \times \R$.
  Fix~$p \in \mathbb{H}^{2}$, a unit vector~$e \in T_{p}\mathbb{H}^{2}$, and~$a > 0$.
  Let~$q := \exp_{p}(ae)$ and let~$\gamma$ be the unit-speed geodesic in~$\mathbb{H}^{2}$ through~$q = \gamma(0)$ orthogonal to~$\ccint{p,q}$.
  For~$s \neq 0$, the points~$p$,~$q$ and~$\gamma(s)$ form a geodesic triangle with a right angle at~$q$.
  Write~$r(s) := d(p, \gamma(s))$ and let~$\theta(s)$ be the angle at~$p$ between~$e$ and~$\log_{p} \gamma(s)$, so that~$\phi(s) := \langle e, \log_{p} \gamma(s) \rangle = r(s) \cos \theta(s)$.
  In this right-angled triangle, the hypotenuse has length~$r(s)$, and the sides adjacent and opposite to the angle~$\theta(s)$ have lengths~$a$ and~$\abs{s}$, respectively.
  The hyperbolic Pythagorean theorem~\cite[Theorem~7.11.1]{Beardon} and the relation between the hypotenuse, an adjacent side and the included angle~\cite[Theorem~7.11.2~(iii)]{Beardon} give
  \begin{align*}
    \cosh r(s) = \cosh a \cosh s
    \eqsepand
    \tanh a = \tanh r(s) \cos \theta(s)
    \eqfinp
  \end{align*}
  Hence
  \begin{align*}
    \phi(s) = r(s) \cos \theta(s) = \tanh a \cdot \frac{r(s)}{\tanh r(s)}
    \eqfinp
  \end{align*}
  Since~$r \mapsto \frac{r}{\tanh r}$ is strictly increasing on~$\ooint{0, \infty}$ and~$r(s) > r(0) = a$ for~$s \neq 0$, we get~$\phi(s) > \phi(0) = a$ for every~$s \neq 0$.

  Now fix~$s > 0$, put~$\delta := \phi(s) - a > 0$ and~$t_{0} := - \frac{1}{2}\bigl(a + \phi(s)\bigr)$, and consider~$x := (p,0)$ and~$g := (e,1) \in \mathcal{H}_{x}X$.
  Since~$\log_{x}(w,t) = (\log_{p} w, t)$, we have~$\langle g, \log_{x}(w,t) \rangle = \langle e, \log_{p} w \rangle + t$.
  The points~$(\gamma(\pm s), t_{0})$ belong to~$U_{x,g}$, because~$\phi(s) + t_{0} = \delta/2 > 0$.
  The midpoint~$(q, t_{0})$ of the geodesic joining them does not belong to~$U_{x,g}$, because~$a + t_{0} = -\delta/2 < 0$.
  Hence~$U_{x,g}$ is not convex.
  On the other hand, every maximally monotone vector field on~$X$ with full domain satisfies the surjectivity condition by~\cite{LLM,LLMW}.
\end{example}

The following is an example of a Hilbert-Hadamard space with the convex half-space condition which is not an Hadamard manifold.
This example provides a case emphasizing the novelty of Theorem~\ref{thm: Minty HH} where the known results do not apply.
\begin{example} \label{ex: HH hyperbolic}
  Let~$X$ be a closed convex subset of the real hyperbolic space~$\mathbb{H}^{n}$ with~$n < \infty$.
  For~$x \in X$, we have~$T_{x}X \subset T_{x}\mathbb{H}^{n} \cong \R^{n}$, and hence~$\mathcal{H}_{x}X \subset T_{x}\mathbb{H}^{n}$.
  Consider the Klein model of~$\mathbb{H}^{n}$ \cite[Chapter~I.6]{BH} in the open unit ball of~$\R^{n}$, with~$x$ placed at the origin.
  In this model, geodesics are straight chords~\cite[pp.~83--84]{BH}, so that convex subsets of~$\mathbb{H}^{n}$ are exactly the Euclidean convex subsets of the ball.
  At the origin, the Riemannian metric coincides with the Euclidean one, and~$\log_{x} y$ is a positive multiple of the Euclidean vector~$y$.
  Hence, for~$g \in \mathcal{H}_{x}X$, we have~$U_{x,g} = X \cap \set{ y \mid \langle g, y \rangle_{\R^{n}} > 0 }$, which is the intersection of two convex sets.
  Thus~$X$ satisfies the convex half-space condition.
  If~$X$ has a nonempty boundary in~$\mathbb{H}^{n}$, \eg a closed half-space, then~$X$ is not a Hadamard manifold, so that the results of~\cite{LLM,LLMW} do not apply directly.
  Moreover,~$X$ is not flat, so Theorem~\ref{thm: flat slice principle} does not apply either.
\end{example}

Finally, we remark that~$\Omega$ is not a Hilbert-Hadamard space, since~$T_{O}\Omega$ is not flat.
Indeed, the unit directions~$d_{1}, d_{2}, d_{3}$ have pairwise distance~$2$ and are all at distance~$1$ from the apex, which is impossible for three distinct vectors in a Hilbert space.
This justifies the complementarity of Theorems~\ref{thm: flat slice principle} and~\ref{thm: Minty HH} mentioned at the beginning of this section.

\footnotesize
\section*{Acknowledgments}

The author acknowledges the financial support provided by the Center of Excellence in Theoretical and Computational Science (TaCS-CoE), Faculty of Science, King Mongkut's University of Technology Thonburi (KMUTT), Bangkok, Thailand, KMUTT.

The author also acknowledges the use of the large language model Claude Opus 5.5 (Anthropic) during the preparation of this paper, for drafting and revising parts of the text and proofs, checking the arguments, and verifying the references.
All content was reviewed and verified by the author, who takes full responsibility for the final work.

\bibliography{cat0_nonlinear_anal.bib}

\end{document}